\documentclass[a4paper,12pt]{amsart}

\usepackage{amssymb,amsmath,latexsym,amsthm}
\usepackage{datetime}
\usepackage{xcolor}
\usepackage{tikz}
\usepackage{float}
\usetikzlibrary{calc,arrows.meta}

\usepackage[pdfauthor={Dimitrios Chatzakos},
pdftitle={},
            pdfkeywords={L-functions},
             pdfcreator={Pdflatex}]
{hyperref}
\hypersetup{colorlinks=true}
\usepackage{xcolor}
\usepackage{enumerate}

\usepackage[english]{babel}

\theoremstyle{plain}
\newtheorem{theorem}{Theorem}[section]
\newtheorem{thm}[theorem]{Theorem}
\newtheorem{lemma}[theorem]{Lemma}

\newtheorem{proposition}[theorem]{Proposition}

\newtheorem*{rem*}{Remark}

\theoremstyle{definition}

\newtheorem{remark}[theorem]{Remark}

\def\MR#1{\href{http://www.ams.org/mathscinet-getitem?mr=#1}{MR#1}}

\def\eps{\epsilon}

\def \e {{\varepsilon}}

\def \R {{\mathbb R}}

\def \C {{\mathbb C}}
\def \g {\gamma}
\def \G {\Gamma}

\def \GmodH {{\Gamma\backslash\mathbb H}}

\def \supp {{\rm supp\,} }

\DeclareMathOperator{\arccosh}{arcosh}

\newcommand{\abs}[1]{\left\lvert #1 \right\rvert}

\title[Square root cancellation in the hyperbolic lattice counting problem]{Square root cancellation in the hyperbolic lattice counting problem over cocompact groups}

\author[Chatzakos]{Dimitrios Chatzakos}
\address{
   Department of Mathematics,
   University of Patras,
   26 504, Patras,
   Greece
}
\email{dchatzakos@math.upatras.gr}

\author[Dimakis]{Panagiotis Dimakis}
\address{
Department of Mathematics, 
University of Maryland,
College Park 20740, MD, 
USA.
}
\email{pdimakis12345@gmail.com}

\date{\today}
\subjclass[2020]{Primary
   11P21  
   11N45  
   11F67; 
   Secondary
   11E45, 
   11M32} 

\keywords{}

\begin{document}

\begin{abstract}
For cocompact Fuchsian groups $\Gamma \leq \hbox{PSL}(2, \mathbb{R})$ we improve Selberg's bound for the error term of the counting function in the hyperbolic lattice counting problem, achieving an essentially optimal upper bound for the error term 
\begin{equation*} 
  E(X;z, w)  = O(X^{1/2+\varepsilon})
\end{equation*}
for almost every pair of points $z,w$. We extend this pointwise result for cocompact lattices acting on the $n$-dimensional real hyperbolic space $\mathbb{H}^n$. Moreover, in the $2$-dimensional case we study the second moment of the error term of the counting function. We improve the upper bounds of Chamizo and Cherubini for almost all pairs $z,w$, but we disprove a conjecture of Phillips and Rudnick for the case of cocompact Fuchsian groups.
\end{abstract}

\maketitle

\section{Introduction}

\subsection{Historical background}

Let $\Gamma$ be a cofinite Fuchsian group acting on the hyperbolic plane $\mathbb{H}^2$, $z$ and $w$ two fixed points on $\mathbb{H}^2$ and $\rho(z,w)$ their hyperbolic distance. Let us also denote by $u(z,w)$ the point-pair invariant function
\begin{equation} \label{pointpairinvariant}
  u(z,w) = \frac{|z-w|^2}{4 \Im(z) \Im(w)},
\end{equation}
which is explicitly related to the hyperbolic distance by the relation $\cosh \rho (z,w) = 2 u(z,w) +1$. The classical hyperbolic lattice counting problem asks to estimate the asymptotic growth of the counting function
\begin{equation*}
  N(X;z, w)  = \# \{ \gamma \in \Gamma : 4u( z, \gamma w) + 2 \leq X \}
\end{equation*}
as $X \to +\infty$. The main theorem, first proved by Selberg \cite{Selberg:1977},
states that for any cofinite group the counting function satisfies the asymptotic growth
\begin{equation*}
N(X;z, w)  = M(X;z,w) + E(X;z,w),
\end{equation*}  
where the main term $M(X;z,w)$ is a well understood quantity given as a finite sum over the small eigenvalues $\lambda_j \leq 1/4$ of the hyperbolic Laplacian and the error term $E(X;z,w)$ satisfies the upper bound
\begin{equation} \label{selbergbound}
E(X;z,w) = O(X^{2/3})
\end{equation}
(here the implied constant may depend only on $z,w,$ and $\Gamma$). We refer to \cite{Good:1983, Iwaniec:2002, Patterson:1975} for more details on this problem and its history.

The main conjecture in the hyperbolic lattice counting problem predicts that for any group $\Gamma$, for any pair of points $z,w$ and for any $\varepsilon>0$ the bound
\begin{equation} \label{mainconjecture}
  E(X;z, w)  = O(X^{1/2+\varepsilon})
\end{equation}
holds. This conjecture was first supported by second moment estimates of Chamizo \cite{Chamizo:1996a} and by lower bounds and mean value results of Phillips and Rudnick \cite{PhillipsRudnick:1994}. We also refer to \cite{Chatzakos:2017, Cherubini:2018, CherubiniRisager:2018, PetridisRisager:2018a} for further results supporting the validity of the conjecture \eqref{mainconjecture}. However, for more than fifty years the upper bound $O (X^{2/3})$ of Selberg for the error term had not been improved for any cofinite group $\Gamma$ and for any pair of points $z, w$. 

In 2025, Chatzakos--Cherubini--Lester--Risager \cite{cclr} proved the first unconditional pointwise improvement of Selberg's bound for $\Gamma = \hbox{PSL}(2,\mathbb{Z})$ and for the case where $z,w$ are Heegner points of different discriminants. Their main result in \cite{cclr} was the following:
\begin{thm} \label{cclrmaintheorem}
  Let $\Gamma$ be the modular group and $z_d,z_{d'}\in \mathbb{H}^2$ be two Heegner points of different negative squarefree discriminants $d,d'$ respectively. Then
  \begin{equation*}
    E(X;z_d, z_{d'})  = O_{d,d'} \left(\frac{X^{2/3}}{(\log X)^{1/6} }\right).
  \end{equation*}
\end{thm}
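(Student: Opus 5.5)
The plan follows the strategy of \cite{cclr}: a smoothed spectral expansion, reduced to bounding a spectral exponential sum twisted by eigenfunction values at Heegner points. Those values are expressed through Waldspurger/Zhang-type formulas and controlled on average.

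\emph{Step 1: smoothing and the spectral expansion.} First I replace the sharp cutoff $4u+2\le X$ by a smooth kernel $k_\pm$ that equals the characteristic function up to a window of width $Y$, with $1\le Y\le X$. By monotonicity,
\[
N(X;z,w)=\sum_\g k(z,\g w)+O(Y).
\]
Expanding the smoothed sum through the spectral theorem on $\Gamma\backslash\mathbb H^2$ gives
\[
\sum_j h(t_j)u_j(z)\overline{u_j(w)}+\text{(Eisenstein term)}.
\]
For $\mathrm{PSL}(2,\mathbb Z)$ there are no exceptional eigenvalues, so the main term comes only from $\lambda_0=0$. The Selberg/Harish-Chandra transform satisfies
\[
h(t)\approx X^{1/2}\,\Re\bigl(c(t)X^{it}\bigr)\,t^{-3/2}\min\bigl(1,(X/(Yt))^{A}\bigr).
\]
The error is therefore governed by
\[
X^{1/2}\sum_{t_j\le X/Y} t_j^{-3/2}X^{it_j}u_j(z_d)u_j(z_{d'}),
\]
plus an analogous continuous integral. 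The trivial bound uses the local Weyl law, $\sum_{t_j\le T}|u_j(z)|^2\ll T^2$, and this recovers $X^{1/2}(X/Y)^{1/2}+Y$, i.e. Selberg's $X^{2/3}$ at $Y=X^{2/3}$.

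\emph{Step 2: extracting a saving from Heegner points.} The improvement must come from showing that
\[
\sum_{T<t_j\le 2T}|u_j(z_d)u_j(z_{d'})|=o(T^{2}),
\]
i.e. that the products are small on average. No cancellation in $X^{it_j}$ is needed; a logarithmic saving in absolute value suffices. I would use Cauchy--Schwarz with weights. By Waldspurger's formula, $|u_j(z_d)|^2$ summed over the class group equals a central value $L(1/2,u_j)L(1/2,u_j\otimes\chi_d)/L(1,\mathrm{sym}^2u_j)$. Since $d\ne d'$, the two points probe different quadratic twists. The key input is that $|u_j(z_d)|^2$ has a nontrivial distribution. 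Concretely, I would bound the \emph{fractional moment}
\[
\sum_{t_j\sim T}|u_j(z_d)|\,|u_j(z_{d'})|\ll T^2(\log T)^{-\delta}.
\]
To get it, I would show that $\log|L(1/2,u_j\otimes\chi_d)|$ is approximately Gaussian with negative mean $-\tfrac12\log\log T$, following Soundararajan/Radziwi\l\l--Soundararajan upper-bound techniques for moments of $\log L$ in the spectral aspect. Then $\mathbb E[L^{1/2}L'^{1/2}]$ saves a power of $\log T$, because the geometric mean of the two $L$-values is smaller than their arithmetic mean. Here the different discriminants ensure that the twists are ``independent'', so the correlation does not restore the full size.

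\emph{Step 3: optimisation.} The Eisenstein contribution is handled similarly using Dirichlet $L$-functions, and is negligible by comparison. Inserting the saving gives
\[
E\ll X^{1/2}(X/Y)^{1/2}(\log X)^{-\delta}+Y.
\]
Choosing $Y=X^{2/3}(\log X)^{-2\delta/3}$ yields the stated bound with exponent $\delta/3$, so I need $\delta=1/2$. That is exactly the saving $(\log T)^{-1/2}$ expected from a product of two independent square roots of $L$-values: each of the two $\tfrac14$-moments contributes $(\log)^{-1/8}$ times the normalisation, which I would need to check carefully.

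The main obstacle is Step 2: proving the sharp upper bound for the fractional mixed moment of the two twisted central values uniformly over the spectral window. This needs an approximately short Dirichlet polynomial upper bound for $\log L$ valid for almost all $t_j$, via the Kuznetsov formula for the mean values of the resulting polynomials, together with handling the $L(1,\mathrm{sym}^2)$ normalisation.
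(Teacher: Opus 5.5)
Your overall route is the one the paper attributes to \cite{cclr}: smoothing with a sandwich, spectral expansion, and a logarithmic saving in $\sum_{t_j\le T}|u_j(z_d)u_j(z_{d'})|$ obtained from Waldspurger's formula and fractional moments of $L$-functions. The problem is an arithmetic slip in Step 3, and it sends Step 2 after the wrong target. With a saving $(\log T)^{-\delta}$, partial summation gives $E\ll X^{1/2}(X/Y)^{1/2}(\log X)^{-\delta}+Y$. Your own choice $Y=X^{2/3}(\log X)^{-2\delta/3}$ gives $X^{1/2}(X/Y)^{1/2}=X^{2/3}(\log X)^{\delta/3}$, so both terms equal $X^{2/3}(\log X)^{-2\delta/3}$, not $X^{2/3}(\log X)^{-\delta/3}$. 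The exponent $1/6$ therefore needs only $\delta=1/4$. That is exactly the bound $\sum_{t_j\le T}|\phi_j(z_d)||\phi_j(z_{d'})|\ll T^2(\log T)^{-1/4}$ that the paper names as the main step of \cite{cclr}.

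This matters because a saving of $(\log T)^{-1/2}$ should not be provable; the heuristic you invoke predicts $(\log T)^{-1/4}$ is sharp. Take class number one and model $A=\log L(\tfrac12,u_j)$ and $B_i=\log L(\tfrac12,u_j\otimes\chi_{d_i})$ as independent Gaussians with mean $-\tfrac12\log\log T$ and variance $\log\log T$. This normalises $\mathbb{E}\,e^{A+B_i}\asymp 1$, matching the local Weyl law. Then $|u_j(z_d)u_j(z_{d'})|\approx e^{A+(B_1+B_2)/2}$, up to the $L(1,\mathrm{sym}^2u_j)$ factor. Its expectation is $\mathbb{E}e^{A}\cdot\mathbb{E}e^{B_1/2}\cdot\mathbb{E}e^{B_2/2}=1\cdot(\log T)^{-1/8}\cdot(\log T)^{-1/8}=(\log T)^{-1/4}$. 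Your own remark that each half-power contributes $(\log)^{-1/8}$ multiplies to $1/4$, which contradicts your claimed $1/2$. So Step 2 should aim for $(\log T)^{-1/4}$, and the corrected Step 3 then yields $(\log X)^{-1/6}$.

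A smaller inaccuracy: Waldspurger/Zhang does not compute $|u_j(z)|^2$ summed over the class group. It computes the square of a twisted period, $|\sum_{z\in\Lambda_d}\psi(z)u_j(z)|^2\propto L(\tfrac12,u_j\times\theta_\psi)/L(1,\mathrm{sym}^2u_j)$, for class group characters $\psi$; for trivial $\psi$ the $L$-value is $L(\tfrac12,u_j)L(\tfrac12,u_j\otimes\chi_d)$. When $h(d)>1$, you must write $u_j(z_d)$ as an average over $\psi$ of these periods. You then have to control fractional mixed moments of all the $L(\tfrac12,u_j\times\theta_\psi)$ jointly with those for $d'$.
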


The proof of this result is arithmetic in nature, using Waldspurger's formula and the theory of fractional moments of  $L$-functions.

\subsection{Our main result}

In this paper, we prove the following new unconditional pointwise improvement of Selberg's bound for any cocompact Fuchsian group $\Gamma$ and for almost all pairs of points $z,w \in \mathbb{H}^2$. Our result goes all the way down to the conjectural square-root cancellation. 

\begin{theorem} \label{ourmainthm1}
Let $\Gamma$ be a cocompact Fuchsian group and fix a point $w \in \mathbb{H}^2$. Then, there exists a set $B_w \subset \Gamma \backslash \mathbb{H}^2$ of full hyperbolic measure, such that for every $z \in B_w$ and for every $\epsilon >0 $ the bound
\begin{eqnarray*}
E(X;z,w) = O_{\Gamma, w,z,\epsilon}  \left(X^{1/2} (\log X)^{3/2} (\log \log X)^{1/2+\epsilon}\right)
\end{eqnarray*}
holds as $X \to \infty$, uniformly for all sufficiently large $X$. In particular, the bound \eqref{mainconjecture} holds for almost all pairs of points. 
\end{theorem}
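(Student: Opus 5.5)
We prove the statement by an $L^2$ argument in the variable $z$ combined with a Borel--Cantelli step and a maximal-function (dyadic chaining) argument in $X$. Since $\Gamma$ is cocompact, the spectrum is discrete. The starting point is the smoothed pre-trace formula. Let $k$ be the indicator of $\{4u+2\le X\}$, mollified at scale $\delta$ in the distance variable. The automorphic kernel then has the spectral expansion
\[
K(z,w)=\sum_j h(t_j)\,u_j(z)\overline{u_j(w)},
\]
where the Selberg--Harish-Chandra transform satisfies $h(t)\ll X^{1/2}|t|^{-3/2}\min(1,(\delta|t|)^{-A})$ for large $|t|$. The small eigenvalues give $M(X;z,w)$ up to acceptable errors. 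Write $E_\delta(X;z,w)$ for the smoothed error coming from $t_j\in\mathbb{R}$, $|t_j|\ge 1$.

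Fix $w$ and integrate $|E_\delta|^2$ over $z\in\Gamma\backslash\mathbb{H}^2$. By Parseval,
\[
\int_{\Gamma\backslash\mathbb{H}^2}|E_\delta(X;z,w)|^2\,d\mu(z)=\sum_{t_j}|h(t_j)|^2|u_j(w)|^2 .
\]
The local Weyl law gives $\sum_{T\le t_j\le 2T}|u_j(w)|^2\asymp T^2$. Hence the right-hand side is $\ll X\sum_{T\text{ dyadic}\le 1/\delta}T^{-3}T^2\ll X\log(1/\delta)$. So the mean square of the error is of size $X\log X$ for each fixed $X$ when $\delta=X^{-1}$. This is the crucial square-root saving, and it comes for free in $L^2(dz)$.

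The difficulty is passing from a fixed $X$ to all large $X$. We want a bound on $\sup_{X\le Y\le 2X}|E(Y;z,w)|$ in $L^2(dz)$ that loses only logarithms. For this we use a Rademacher--Menshov type chaining. Discretize $[X,2X]$ at $\asymp X$ points; the unsmoothed count changes by $O(1)$ over steps of length $1$ on average, and the sandwich $N(Y-O(\delta Y))\le N_\delta\le N(Y+O(\delta Y))$ handles smoothing. Write $E(Y)$ as a telescoping sum over dyadic subintervals, with $\log X$ levels. At each level, the $L^2$ norm of an increment $E(Y_2)-E(Y_1)$ is estimated again by Parseval. The increment's transform is small when $|Y_2-Y_1|$ is small relative to $X/|t|$, and the sum over intervals at each level gives $\ll X\log X$. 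Cauchy--Schwarz over the $\log X$ levels then yields
\[
\int\sup_{X\le Y\le 2X}|E(Y;z,w)|^2\,d\mu(z)\ll X(\log X)^3 .
\]
We expect this maximal inequality, i.e.\ controlling the increment spectral weights uniformly over levels, to be the main obstacle. We would first try expressing increments through $h_{Y_2}-h_{Y_1}$ and bounding by $\min(|Y_2-Y_1|X^{-1/2}|t|^{-1/2},X^{1/2}|t|^{-3/2})$.

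Finally, apply Chebyshev on dyadic blocks $X=2^m$. The measure of the set of $z$ where the supremum exceeds $X^{1/2}(\log X)^{3/2}(\log\log X)^{1/2+\epsilon}$ is
\[
\ll (\log\log X)^{-1-2\epsilon}\asymp m^{-1-2\epsilon}\,(\log 2)^{-1-2\epsilon},
\]
which is summable in $m$. By Borel--Cantelli, almost every $z$ lies in only finitely many exceptional sets, and we define $B_w$ as this full-measure set. For $z\in B_w$ the stated bound then holds for all sufficiently large $X$, and in particular \eqref{mainconjecture} follows.
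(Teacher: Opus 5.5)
There is a genuine gap in your final Borel--Cantelli step. With $X=2^m$ one has $\log\log X=\log m+O(1)$, not $\asymp m$; you have replaced $\log\log X$ by $\log X$. Start from your stated maximal bound $\int\sup_{X\le Y\le 2X}|E(Y;z,w)|^2\,d\mu(z)\ll X(\log X)^3$ and apply Chebyshev at height $X^{1/2}(\log X)^{3/2}(\log\log X)^{1/2+\epsilon}$. The exceptional sets then have measure only $\ll(\log m)^{-1-2\epsilon}$, and $\sum_m(\log m)^{-1-2\epsilon}$ diverges. As written, your argument proves only $E\ll X^{1/2}(\log X)^{2}(\log\log X)^{1/2+\epsilon}$. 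To reach the exponent $3/2$ you need the maximal inequality with $X(\log X)^2$. Then the exceptional measures are $\ll 1/(\log X\,(\log\log X)^{1+2\epsilon})\asymp 1/(m(\log m)^{1+2\epsilon})$, which is summable.

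That sharper bound is available inside your own framework, because the extra $\log X$ is an arithmetic slip. In the Parseval step, $\sum_{T\ \mathrm{dyadic}}T^{-3}\cdot T^2=\sum_T T^{-1}=O(1)$, so the fixed-$X$ mean square is $\ll X$, not $X\log(1/\delta)$. With your increment bound, an interval of length $H$ gives $\|E_\delta(Y_2)-E_\delta(Y_1)\|_2^2\ll\sum_T\min(H^2X^{-1}T,\,XT^{-1})\ll H$, since the sum is geometric on both sides of $T\asymp X/H$. Hence each of the $\asymp\log X$ chaining levels contributes $\ll X$, and Minkowski over levels gives $X(\log X)^2$.

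Two smaller points. The grid reduction should be pointwise, not ``on average'': monotonicity of $N$ gives $|E(Y)|\le\max(|E(Y_i)|,|E(Y_{i+1})|)+O(Y_{i+1}-Y_i)$. Also, $B_w$ must be intersected over $\epsilon=1/k$ so that it does not depend on $\epsilon$.

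Once repaired, your dyadic chaining in $X$ is a valid alternative to the paper's route. The paper works on unit $R$-intervals $[n,n+1]$ with $\delta=e^{-(n+1)/2}$. It applies the Sobolev-type inequality $\sup|F|^2\le\|F\|_2^2+2\|F\|_2\|F'\|_2$ to each dyadic spectral block, using $\|B\|_2^2\ll T^{-1}Q^2$ and $\|\partial_RB\|_2^2\ll TQ^2$, and then sums the blocks by Minkowski. Both arguments lose exactly one $\log X$ in $L^2$ norm: yours from the number of chaining levels, the paper's from the $\asymp n$ spectral blocks below $1/\delta$ where $Q\asymp 1$.
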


In the next subsection we discuss the main ideas in the proof of Theorem \ref{ourmainthm1}.

\subsection{Spectral theory and comparison of the methods of proofs}

We now give a brief explanation of our strategy and compare it to what was previously known. Let $\{ \phi_j \}_{j=0}^{\infty}$ be an orthonormal basis for the discrete spectrum of the positive automorphic Laplacian of the Riemann surface $\Gamma \backslash \mathbb{H}^2$, with corresponding eigenvalues $ \{\lambda_j \}_{j=0}^{\infty}$. Set $\lambda_j =  1/4 +t_j^2$. For the rest of this paper we also set $X = 2 \cosh R \asymp e^R$, i.e. $R = \cosh^{-1}(X/2) \asymp \log X$. 

Roughly speaking, after applying the pre-trace formula to a sufficient automorphic kernel (see Section \ref{section2}), bounding the error term in the hyperbolic circle problem reduces to bounding the spectral exponential sums
\begin{equation} \label{BTXdef}
\mathfrak{B} (T, X;z,w) := \mathfrak{B} (T,X) = \sum_{T \leq t_j < 2T} h (t_j) \phi_j(z) \overline{\phi_j(w)},   
\end{equation}
where $h (t_j) = h_R(t_j)$ is the spherical transform of the chosen automorphic kernel. Approximations of the spherical transform reduce the problem to estimates for the spectral exponential sum
\begin{equation}\label{hlfwvkrnl}
  \mathfrak{S} (T,X;z,w) := \mathfrak{S} (T,X) =  \sum_{T \leq t_j < 2T} X^{it_j} \phi_j(z) \overline{\phi_j(w)}.
\end{equation}
Bounding trivially (i.e. killing the oscillatory factor $X^{i t_j}$), using Cauchy--Schwarz inequality and the local Weyl law
(see \cite{Hormander:1968}, \cite[Theorem 7.2]{Iwaniec:2002}) we deduce the trivial bound $\mathfrak{S} (T,X) = O (T^2)$. This bound implies Selberg's bound \eqref{selbergbound}. The difficulty in improving \eqref{selbergbound} arises from the difficulty to improve on the trivial bound on $\mathfrak{S} (T,X)$, for any pair of points $z,w$. 

The main motivation for the pointwise improvement in \cite{cclr} came from a famous bound of H\"ormander (see \cite{Hormander:1968} for cocompact groups and \cite{Chamizo:1996} for cofinite groups), which gives that for $X=1$ and for \textit{different points} $z\neq w$ the spectral exponential sum is much smaller:
\begin{equation*} 
  \sum_{T \leq t_j < 2T}  \phi_j(z) \overline{\phi_j(w)}  = O \left(T^{1+\varepsilon} \right).
\end{equation*}
H\"ormander's bound holds in larger generality, and we refer to \cite[Section~1]{cclr} and references therein for more details on the study of growth of the spectral function. The main step for the proof of Theorem \ref{cclrmaintheorem} in \cite{cclr} was establishing the bound
\begin{equation*} 
  \sum_{t_j \leq T}  \abs{\phi_j(z)} \abs{\phi_j(w)}
  = O \left( \frac{T^2}{(\log T)^{1/4}}\right)
\end{equation*}
for the orthonormal basis $\{\phi_j\}_{j=0}^{\infty}$ consisting of Hecke--Maass cusp forms the modular group $\Gamma = \hbox{PSL}(2,\mathbb{Z})$ and for Heegner points $z =z_d, w=z_{d'}$ of different discriminants.

Our proof is heavily inspired by the above approach. Based on H\"ormander's approach, we initially tried a microlocal approach inspired by \cite{Canzani:2023} and its own antecedents. Later we tried obtaining averaged results over a neighborhood of Teichmuller space in the hopes of proving the existence of a hyperbolic structure and points $z,w$ witnessing an exponent below the $2/3$ threshold. These avenues did not produce meaningful results for us but they may still be worth exploring. Eventually, the particular quantity \eqref{hlfwvkrnl} motivated us to wonder whether strong existing half-wave kernel estimates could be useful. This avenue ultimately led to the idea of proof of Theorem \ref{ourmainthm1}.

Methods from functional and harmonic analysis have been used before to attack the hyperbolic lattice counting problem, mainly focusing in the interplay between $L^2$ and $L^p$ estimates (see for instance \cite{Chamizo:1996}). Let us denote by $\partial_R f(R)$ the partial derivative of a function $f(R)$. The first step required for our new approach is, for a fixed point $w$, to bound the $L^2$-norms of the function
\begin{equation*}
\mathfrak{B}(T, 2 \cosh R; \cdot, w)
\end{equation*}
and its partial derivative
\begin{equation*}
\partial_R \, \mathfrak{B} (T, 2 \cosh R; \cdot , w),
\end{equation*}
in the $R = \cosh^{-1}(X/2) \asymp \log X$ variable. The proof uses spatial orthogonality of eigenfunctions and thus already makes it evident that the diagonal cannot be treated using this method.

Through the use of a modified one-dimensional Sobolev inequality (Lemma \ref{BSE}) and a change of integration, we can use the aforementioned $L^2$-estimates to prove a maximal dyadic $L^2$ estimate in the $R$-variable (Theorem \ref{maximalestimate}). The use of the modified Sobolev inequality as opposed to its classical counterpart allows for the crucial cancellation of a power of of $T$ which makes the bounding terms appearing on \ref{maximalestimate} summable. Finally, a standard application of the Chebyshev inequality and the Borel-Cantelli lemma to the $z$ variable produces the large set of points $B_w$ appearing in Theorem \ref{ourmainthm1}.

\begin{remark}
The powers of the extra logarithms in Theorem \ref{ourmainthm1} come from the sum over dyadic spectral intervals and from choosing a summable threshold in the Borel-Cantelli Lemma. They are not necessarily optimal, but their improvement would require additional input. 
\end{remark}

\begin{remark}
The proof of Theorem \ref{ourmainthm1} can work for a general cofinite group $\Gamma$, working with the spectral expansion 
\begin{equation*}
 \sum_{ t_j > 0}h^{\pm}(t_j)\phi_j(z)\overline{\phi_j(w)} + \frac{1}{4\pi} \sum_{\mathfrak{a}} \int_{\mathbb{R}} h^{\pm}(t) E_{\mathfrak{a}} \left(z, \frac{1}{2} +it\right) E_{\mathfrak{a}} \overline{\left(w, \frac{1}{2} +it\right)} \, dt.
\end{equation*}
in the place of \eqref{discreteexp} (taking into account simultaneously the contribution of the discrete spectrum and the continuous spectrum in the error term $E(X;z,w)$). The Eisenstein spectrum is assembled into $L^2$ wave packets and
controlled by a generalized Plancherel formula and the full local Weyl law, combined with a cofinite dyadic maximal estimate. The upper bound of the error term depends on the points $z,w$ and is not uniform for $z$ close to a cusp. 
\end{remark}

\begin{remark} \label{remark1.5}
Maximal estimates are not new in any sense of the word. A great reference is \cite{Sogge:1993} where the versatility of maximal estimates becomes evident. The way in which we employ the Sobolev inequality is also well known, see for example Lemma $2.4.2$ in \cite{Sogge:1993}, even though similar applications have already appeared in analytic number theory, for example by Gallagher \cite[Section~9]{Iwaniec-Kowalski}. These techniques go back to the early $20^{\text{th}}$ century. However, to the best of our knowledge, they have never been employed in lattice counting problems. We believe this large circle of ideas and techniques that comes with maximal estimates will become a powerful tool in problems similar to the one studied in this paper. We have already employed it successfully in a number of forthcoming results \cite{cdks, dimakis}.
\end{remark}

\subsection{The second moment of the error term: failure of Cram\'er's law}

For the second moment of the error term, Chamizo \cite{Chamizo:1996a} proved that 
\begin{equation} \label{secondmomentconj}
  \frac{1}{X}\int_{X}^{2X} \left| E(X;z,w) \right|^2 dx \ll X  (\log X)^a
\end{equation}
with $a=2$ for any cofinite Fuchsian group. The power of $\log X$ was further reduced to $a=1$ by Cherubini \cite{Cherubini:2018} and to $a=3/4$ for pairs of Heegner points of different discriminants by Chatzakos - Cherubini - Lester - Risager \cite{cclr} for the modular group. Using numerical data for Fermat groups, Phillips and Rudnick \cite[Section 3.8]{PhillipsRudnick:1994} predicted that \eqref{secondmomentconj} should hold for $z=w$ with an upper bound $\ll X$. If true, this would be an analogue of Cram\'er's result \cite{cramer} for the Euclidean Gauss circle problem. 

In the following two results, we study the second moment of the error term for cocompact groups. First, we prove a stronger upper bound for the second moment of the error term for almost all $z,w$, Second, and maybe rather unexpectedly, we prove that, contrary to Rudnick and Phillips' prediction, Cram\'er's result does not hold for the error term on the diagonal $z=w$ in the case of cocompact groups $\Gamma$. 

\begin{theorem} \label{cramercloser}
Let $\Gamma$ be any cocompact group and fix a point $w \in \mathbb{H}^2$. Then, there exists a set $B_w \subset \Gamma \backslash \mathbb{H}^2$ of full hyperbolic measure, such that for every $z \in B_w$ the bound
\begin{equation*} 
  \frac{1}{X}\int_{X}^{2X} \left| E(X;z,w) \right|^2 dx \ll X   \log \log X
\end{equation*}
holds as $X\to \infty$, uniformly for all sufficiently large $X$.
\end{theorem}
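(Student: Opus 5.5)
We prove Theorem \ref{cramercloser} with the same $L^2$-in-$z$ strategy as Theorem \ref{ourmainthm1}. The maximal estimate is replaced by a plain mean-square estimate in the $R$-variable, which is cheaper and avoids the losses from the Sobolev step. Fix $w$ and write $x = 2\cosh r$. We smooth the counting function with the kernels $h^{\pm}$ from Section \ref{section2}, at a scale $\delta$ chosen as a suitable power of $X$, e.g.\ $\delta = X^{-1/2}$. Since $E(x;z,w)$ is sandwiched between the smoothed errors $E^{\pm}(x;z,w)$ up to $O(\delta x)$, which is acceptable, it suffices to bound
\begin{equation*}
I(X;z) := \frac1X\int_X^{2X} |E^{\pm}(x;z,w)|^2\,dx .
\end{equation*}
By the pre-trace formula, $E^{\pm}(x;z,w)$ is a sum over the discrete spectrum with $t_j>0$ of $h^{\pm}_r(t_j)\phi_j(z)\overline{\phi_j(w)}$. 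For $T\le t_j<2T$ we have $h^{\pm}_r(t_j) \approx x^{1/2}\,\Re\!\big(c(t_j) x^{it_j}\big) t_j^{-3/2}$. This gives $E^\pm = x^{1/2}\sum_T \mathfrak{B}(T,x;z,w)$ plus tails. The decay $\hat k(\delta t)$ cuts off at $T\lesssim \delta^{-1}$, so there are $\asymp \log X$ dyadic blocks.

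The key step is to integrate $I(X;z)$ over $z\in\Gamma\backslash\mathbb{H}^2$ before anything else. By orthonormality of the $\phi_j$,
\begin{equation*}
\int_{\Gamma\backslash\mathbb H^2} I(X;z)\,d\mu(z) = \frac1X\int_X^{2X} \sum_{t_j>0} |h^{\pm}_r(t_j)|^2 |\phi_j(w)|^2\,dx .
\end{equation*}
This is diagonal in $j$, so no off-diagonal oscillation has to be controlled. Inserting $|h_r(t_j)|^2\ll x\,t_j^{-3}\min(1,(\delta t_j)^{-A})$ and the local Weyl law $\sum_{t_j\le T}|\phi_j(w)|^2 \ll T^2$, each dyadic block contributes $\ll X\, T^{-3}\cdot T^2 = X/T$. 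Summing over $T$ then gives
\begin{equation*}
\int_{\Gamma\backslash\mathbb H^2} I(X;z)\,d\mu(z) \ll X .
\end{equation*}
The only care needed is to keep the precise $t^{-3/2}$ decay of $h^\pm$. The contribution of small eigenvalues and of $t_j\le 1$ is absorbed into $M$ or is $O(X)$ trivially. So the averaged second moment is $O(X)$, with no logarithm at all.

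Next we pass from the average over $z$ to almost every $z$. Take $X_k = 2^k$ and apply Chebyshev's inequality:
\begin{equation*}
\mu\{z: I(X_k;z) > X_k\, \log k\,(\log\log k)^{1+\epsilon}\}\ll \frac{1}{k\log k\,(\log \log k)^{1+\epsilon}}.
\end{equation*}
A threshold like $X_k\cdot g(k)$ with $\sum 1/g(k)<\infty$ is what we need. With $g(k)=k^{1+\epsilon}$ we would lose a power $\log X$, so the goal of $\log\log X$ forces $\sum 1/g(k)$ to converge with $g(k)\asymp\log k$. That fails, and this is the \textbf{main obstacle}. Our fix is to apply Borel--Cantelli along a sparser sequence $X_k = e^{k^2}$ or $X_k = 2^{2^k}$, where $g(k)\asymp k^{1+\epsilon}$, or $g(k)\asymp k^2$, corresponds to a threshold $\asymp \log\log X_k$. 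We then interpolate between consecutive $X_k$ using the $L^2$ control of $\partial_R\mathfrak B$ established earlier, which we may assume from the paper. Concretely, for $X\in[X_k,X_{k+1}]$ we bound $\sup_{X_k\le X\le X_{k+1}} I(X;z)/X$ in mean over $z$. One option is a one-dimensional Sobolev/Gallagher-type inequality in $\log X$, applied to $\int_{\Gamma\backslash\mathbb H^2}\sup_X$, as in Lemma \ref{BSE} and Theorem \ref{maximalestimate}. Another is the dyadic decomposition $[X_k,X_{k+1}]=\bigcup [2^m,2^{m+1}]$: there $I(2^m;z)$ is a nonnegative average, and the mean over $z$ of the maximum is controlled via $\partial_R$ of the squared sum with Cauchy--Schwarz. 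Either way the aim is
\begin{equation*}
\int \sup_{X_k\le X< X_{k+1}} \frac{I(X;z)}{X}\,d\mu(z)\ll 1 ,
\end{equation*}
possibly after a loss that can be absorbed into the choice of sparse sequence. The target rate is that this be $O(\log\log X)$ almost everywhere. Borel--Cantelli on the resulting sets gives, for every $z$ outside a null set $B_w^c$, that $I(X;z)\ll X\log\log X$ for all large $X$, and this proves Theorem \ref{cramercloser}. I expect the difficulty to be exactly this maximal step: getting the sup over a long range in $X$ with only a $\log\log X$ loss.
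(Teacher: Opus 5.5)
There is a genuine gap. It is the step you flag yourself, and it cannot be closed with the information your argument uses.

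\textbf{Why the averaging-in-$z$ route stalls.} Your orthogonality computation is correct. But it only says that on each unit window of $R=\arccosh(x/2)$ the normalized mean square $I(X;z)/X$ has $z$-average $O(1)$, and this does not decay in $X$. Per-window second moments in $z$ control a supremum over $L$ windows only through the union bound, at a cost of a factor $L$. If all you know is that each of $L$ nonnegative functions has mean $O(1)$, their supremum can have mean $\asymp L$.
\begin{itemize}
\item Along $X_k=2^k$ you need $\sum_k 1/g(k)<\infty$ with $k\asymp\log X$, which costs a factor $\log X$. For example, $g(k)=k(\log k)^{1+\epsilon}$ gives $X\log X(\log\log X)^{1+\epsilon}$, weaker than the case $a=1$ of \eqref{secondmomentconj} quoted in the introduction.
\item Your displayed Chebyshev bound has a spurious factor $k$. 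The correct right-hand side, $1/(\log k(\log\log k)^{1+\epsilon})$, is not summable.
\item For $X_k=e^{k^2}$ one has $\log\log X_k\asymp\log k$, so $g(k)\asymp k^2$ is a threshold of size $\log X_k$, not $\log\log X_k$.
\item For $X_k=2^{2^k}$ the block $[X_k,X_{k+1}]$ spans $\asymp\log X_k$ unit $R$-windows. The union bound then gives only $\int\sup_{X_k\le X<X_{k+1}}I(X;z)/X\,d\mu(z)\ll\log X_k$, and a Sobolev step such as Lemma \ref{BSE} gains nothing across windows.
\end{itemize}
So the bound $\int\sup\ll 1$ you set as your aim is unsupported. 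Even granting it, the threshold $k^{1+\epsilon}$ would give $(\log\log X)^{1+\epsilon}$, not $\log\log X$.

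\textbf{The missing idea.} The $\log\log X$ comes from a \emph{deterministic} bound; probability is used only for a tail whose expectation is summable. The paper works with $e(R;z,w)=e^{-R/2}E(2\cosh R;z,w)$ on windows $[k,k+3]$. It splits the spectrum at height $L_k=(k+1)^2$, writing $e=F_{L_k}+G_{L_k}$, where $F_T$ is the part with $t_j\le T$.

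For the low frequencies, Lemma \ref{L1} proves $\int_A^{A+3}|F_T(R;z,w)|^2\,dR\ll\log T$ for \emph{every} $z$. It uses almost-orthogonality in $R$ rather than in $z$:
\begin{itemize}
\item Majorize by $\int\chi_A|S|^2=\sum_{i,j}a_i\overline{a_j}K_A(t_i-t_j)$. Here $a_j=t_j^{-3/2}\phi_j(z)\overline{\phi_j(w)}$, and $K_A$ is the Fourier transform of a smooth bump $\chi_A\ge\mathbf{1}_{[A,A+3]}$, so $|K_A(\xi)|\ll(1+|\xi|)^{-2}$.
\item Group the $t_j$ into unit windows, with block sums $B_n\ll n^{-1/2}$ by the local Weyl law. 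This leaves $\ll\sum_{n\le T+1}B_n^2\ll\log T$.
\end{itemize}

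Your $z$-orthogonality computation is needed only for the tail, via Lemma \ref{L2}: $\int_{\Gamma\backslash\mathbb{H}^2}\int_k^{k+3}|G_{L_k}|^2\,dR\,d\mu(z)\ll L_k^{-1}=(k+1)^{-2}$. This is summable in $k$. By Tonelli, $\sum_k\int_k^{k+3}|G_{L_k}|^2\,dR<\infty$ for almost every $z$, with no threshold and no maximal inequality. Since $k\asymp\log X$, we get $\log L_k\asymp\log\log X$, which is the claimed bound.
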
 

\begin{theorem} \label{cramerfailure}
Let $\Gamma$ be a cocompact group. Then, for every $z \in \mathbb{H}^2$ we have
\begin{eqnarray} \label{diagonalobstruction}
\limsup\limits_{X \to +\infty} \ \frac{1}{X^2}\int_{X}^{2X} \left| E(X;z,z) \right|^2 dx = +\infty.
\end{eqnarray}
\end{theorem}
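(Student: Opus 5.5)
The plan is to show that on the diagonal the spectral expansion of $E(x;z,z)$ has \emph{positive} coefficients $|\phi_j(z)|^2$. As a result, the near-diagonal part of the mean square gathers a logarithmically divergent sum instead of cancelling. Write $x=e^u$. From the pre-trace formula (Section~\ref{section2}) one expects, heuristically,
\begin{equation*}
E(x;z,z)\approx x^{1/2}\sum_{t_j>0} \bigl(c(t_j)x^{it_j}+\overline{c(t_j)}x^{-it_j}\bigr)|\phi_j(z)|^2, \qquad |c(t)|\asymp t^{-3/2},
\end{equation*}
with $c$ having a smooth phase. I will not work with this divergent expansion directly. Instead I will argue by duality:
\begin{equation*}
\int_X^{2X}|E(x;z,z)|^2\,dx\ \ge\ \frac{\bigl|\int_X^{2X}E(x;z,z)\,\overline{G(x)}\,dx\bigr|^2}{\int_X^{2X}|G(x)|^2\,dx}.
\end{equation*}
Here the test function is $G(x)=x^{1/2}\eta(\log x-\log X)\sum_{t_j\le T}c(t_j)x^{it_j}|\phi_j(z)|^2$, with $\eta\ge0$ smooth and supported in $[0,\log 2]$, and $T=T(X)\to\infty$ slowly (say $T=(\log X)$ or a small power of $X$).

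\textbf{Step 1: the pairing.} To compute $\int E\,\overline G$, I replace the sharp count by the smoothed counts $N^\pm$ sandwiching it, with smoothing at scale $\delta=X^{-1/3}$. The difference contributes $O(X^{1/2}\cdot X^{2/3}\cdot(\ldots))$ after pairing with the finite sum $G$, and this is negligible compared with the main term $X^2\log T$ provided $T$ is small. Then I insert the spectral expansion of $N^\pm$, which converges absolutely. In $u=\log x$ the pairing becomes
\begin{equation*}
X^2\sum_{j}\sum_{t_k\le T} c(t_j)\overline{c(t_k)}|\phi_j(z)|^2|\phi_k(z)|^2\,\widehat{\eta_2}(t_j-t_k)+\text{(terms with } t_j+t_k),
\end{equation*}
where $\eta_2(v)=e^{2v}\eta(v)$. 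The $t_j+t_k$ terms are rapidly decaying by integration by parts in $u$ and are harmless. Choosing $\eta$ with $\widehat{\eta_2}$ essentially concentrated on $|\xi|\lesssim 1$ and positive there, the inner sum over $j$ becomes a smoothed local spectral function $\sum_j \rho(t_k-t_j)|\phi_j(z)|^2$.

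\textbf{Step 2: local Weyl law in unit windows.} The key input is the small-time wave-kernel (H\"ormander) parametrix on the compact surface. For $\rho$ with $\widehat\rho$ supported in a small interval around $0$, it gives
\begin{equation*}
\sum_j\rho(t-t_j)|\phi_j(z)|^2=\frac{t}{2\pi}\,\widehat\rho(0)\,\mathrm{vol}^{-1}\text{-normalised constant}+O(1).
\end{equation*}
In particular this sum is $\asymp t$ uniformly in $z$. The phase of $c(t)$ varies slowly, changing by $O(1/t)$ over unit windows, so $c(t_j)\overline{c(t_k)}\approx|c(t_k)|^2$ when $|t_j-t_k|\lesssim1$. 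Hence the pairing is
\begin{equation*}
\gg X^2\sum_{t_k\le T}t_k^{-3}\,t_k\,|\phi_k(z)|^2\ \gg\ X^2\int_1^T t^{-2}\cdot t\,dt\asymp X^2\log T,
\end{equation*}
using the local Weyl law once more. By the same near-diagonal computation, $\int|G|^2\asymp X^2\log T$.

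\textbf{Step 3: conclusion.} Combining the two steps gives $\frac1{X^2}\int_X^{2X}|E(x;z,z)|^2dx\gg\log T(X)\to\infty$, which is stronger than \eqref{diagonalobstruction}. The main obstacle is Step~1. One must make rigorous that the non-absolutely-convergent sharp-cutoff expansion can be replaced by the smoothed one inside the pairing. One must also check that the $O(1)$ remainders in the windowed local Weyl law and the phase variation of $c$ contribute only $O(X^2)$, which does not swamp the $\log T$ gain. I would first try taking $T$ a tiny power of $X$, so that all smoothing errors are power-saving, and verify that the remainders sum to $O(X^2\sum t^{-2})=O(X^2)$.
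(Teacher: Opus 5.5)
The gap is in Step~2. When you substitute $u=\log x=\log X+v$, the oscillation does not disappear: $x^{i(t_j-t_k)}=X^{i(t_j-t_k)}e^{iv(t_j-t_k)}$. So the pairing, and likewise $\int_X^{2X}|G|^2\,dx$, is really
\[
X^2\sum_{j}\sum_{t_k\le T}c(t_j)\overline{c(t_k)}\,a_ja_k\,X^{i(t_j-t_k)}\,\widehat{\eta_2}(t_j-t_k)+\cdots,\qquad a_j=|\phi_j(z)|^2 .
\]
You dropped the factor $X^{i(t_j-t_k)}$. With it restored, the inner sum is $\sum_j\rho(t_k-t_j)X^{i(t_j-t_k)}a_j$, whose Fourier transform sits near time $\log X$, not near $0$. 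The small-time H\"ormander parametrix says nothing about it, since it encodes the wave kernel at $(z,z)$ at time $\log X$, i.e.\ the lattice point problem itself. There is no reason for the near-diagonal terms to add coherently, and they cannot do so for all $X$. Your computation uses no property of $X$, so for fixed $T$ it would give $X^{-2}\int_X^{2X}|G|^2\,dx\gg\log T$ for every large $X$. But the logarithmic mean over $X$ of this quantity is a constant times the genuinely diagonal sum $\sum_{t\le T}t^{-3}\bigl(\sum_{t_j=t}a_j\bigr)^2$ over distinct $t$. By B\'erard's bound $\sum_{t_j=t}a_j\ll t/\log t$, that sum is $O(\log\log T)$.

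There is a second defect in the same step. Since $\eta$ is smooth with support in $[0,\log 2]$, $\eta_2(0)=0$, so $\hat\rho(0)=2\pi\eta_2(0)=0$ for $\rho=\widehat{\eta_2}$, and the main term of your windowed Weyl law vanishes. This reflects the fact that, even at a favourable $X$, the growth comes only from a shrinking neighbourhood of the point where the phases align. There the truncated sum behaves like $\int_1^T t^{-1/2}e^{ivt}\,dt$, of size $\min(T^{1/2},|v|^{-1/2})$, so your cutoff must be nonzero at that point.

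The missing idea is to restrict to special $X$. By simultaneous Diophantine approximation one finds arbitrarily large $R$ with $e^{it_jR}\approx 1$ for all $t_j\le T$, at the price of $T$ growing extremely slowly with $X$. If you centre the window at such an $R$, your near-diagonal heuristic becomes correct, but only along that sequence. You therefore obtain a $\limsup$ statement (which is all the theorem asserts), not the lower bound for every large $X$ that you announce.

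This is the paper's route, organised by contradiction. Assuming the $\limsup$ is finite, it picks $R_n\to\infty$ with $e^{it_jR_n}\to 1$ for every $j$ (Kronecker plus a diagonal argument), so $F(R_n+\cdot)\to F$ distributionally. The uniform local $L^2$ bounds then force $F\in L^2(-3,3)$. Testing against $\psi_\eps(s)=\eps^{-1/2}\psi(s/\eps)$ with $\supp\psi\subset(-2,-1)$, and using $A(T)=c_zT^2+O(T)$, gives $\langle F,\psi_\eps\rangle\to c\neq 0$, while $\|F\|_{L^2(-2\eps,-\eps)}\to 0$.

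Finally, Step~1 does not close as written either. The sandwich with $\delta=X^{-1/3}$ leaves a pointwise error $O(X^{2/3})$. Against $|G|\approx X^{1/2}T^{1/2}$ on an interval of length $X$, this costs $O(X^{13/6}T^{1/2})$, which exceeds $X^2\log T$. Instead, pair $E$ directly with smooth test functions in $R$; against these the sharp-cutoff spectral expansion converges absolutely by the pre-trace formula.
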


\begin{remark}
The proof of Theorem \ref{cramerfailure} does not specify a rate of divergence for the left side of \eqref{diagonalobstruction}. 
It should be mentioned that it's easier to deduce a higher dimensional analogue of Theorem \ref{cramerfailure} due to the abundance of eigenvalue, see \cite[Corollary~1.1]{katsivelos}. The question of the asymptotic behavior of the second moment remains open in all dimensions. 
\end{remark}

\subsection{Higher dimensions}

For $n \geq 3$ let $\mathbb{H}^n$ denote the classical $n$-dimensional real hyperbolic space. Let also $\Gamma \subset {{\mathrm{SO}^+(1,n)} }$ be a discrete cocompact group of isometries (lattice) acting on $\mathbb{H}^n$ and denote by $\rho(z,w)$ the hyperbolic distance of two points $z,w \in \mathbb{H}^n$. Similarly to the $2$-dimensional case, the counting function $N(X;z,w) = \# \{ \gamma \in \Gamma : 2 \cosh \rho( z, \gamma w) \leq X \}$ in higher dimensions satisfies an asymptotic behavior 
\begin{equation*}
N (X;z,w) = M (X; z,w)  + E (X;z,w),
\end{equation*}
where the main term $M (X; z,w)$ is given by a finite sum over the small eigenvalues 
$\lambda_j \leq (n-1)^2/4 $
and the error term $E (X;z,w)$ (containing the contribution of the large eigenvalues and the continuous spectrum) satisfies the bound
\begin{eqnarray*} 
 E (X;z,w) = O_{\Gamma} \left(X^{n-2 + \frac{2}{n+1}} (\log X)^{\frac{3}{n+1}}\right),
\end{eqnarray*}
see for instance \cite{hillparnovski, laxphillips}. This can be understood as the higher dimensional analogue of Selberg's $X^{2/3}$-bound. One may feel tempted to expect square root cancellation in higher dimensions:
\begin{equation} \label{conjecturen}
 E (X;z,w) = O_{\epsilon} \left(X^{\frac{n-1}{2}+\epsilon} \right)
\end{equation}
for any $z$ and $w$; however Phillips and Rudnick proved that this does not hold globally for $n \geq 4$. In \cite{PhillipsRudnick:1994} they constructed an explicit arithmetic lattice $\Gamma \subset \hbox{SO}^+(1,n) \cap {\hbox{SL}_{n+1}( {\mathbb Z})}$ satisfying
\begin{eqnarray} \label{lowerboundphillipsrudnick}
 E_{\Gamma}(X;z_0,z_0) = \Omega(X^{n-2})
\end{eqnarray}
for a special arithmetic point $z_0$. They emphasize that this kind of jump in the error term is not spectral, instead they deduce this lower bound by clearly arithmetic means (number of solutions of quadratic forms). However, for general lattices $\Gamma$ and points $z,w$ they only proved much weaker lower bounds, consistent with \eqref{conjecturen}.

In our final result, we prove that square root cancellation \eqref{conjecturen} holds for cocompact groups in higher dimensions for almost all pairs of points $z,w$.

\begin{theorem} \label{higherdim}
Let $\Gamma$ be a cocompact lattice and fix a point $w \in \mathbb{H}^n$. Then, there exists a set $B_w \subset \Gamma \backslash \mathbb{H}^n$ of full hyperbolic measure, such that for every $z \in B_w$ and for every $\epsilon >0 $ the bound
\begin{eqnarray*}
E(X;z,w) = O_{\Gamma, w,z,\epsilon}  \left(X^{\frac{n-1}{2}} (\log X)^{3/2} (\log \log X)^{1/2+\epsilon}\right)
\end{eqnarray*}
holds as $X \to \infty$, uniformly for all sufficiently large $X$. In particular, the bound \eqref{conjecturen} holds for almost all pairs of points. 
\end{theorem}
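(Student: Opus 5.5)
The plan is to run the two-dimensional argument behind Theorem \ref{ourmainthm1} in $\mathbb{H}^n$ with dimension-dependent exponents. The steps are: smoothing and the pre-trace formula, an $L^2$ bound in $z$ for dyadic spectral pieces, a maximal estimate in $R$ via the modified Sobolev inequality, and finally Chebyshev plus Borel--Cantelli.

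\emph{Step 1: smoothing and dyadic decomposition.} Write $\lambda_j = (n-1)^2/4 + t_j^2$ and $X = 2\cosh R$. Sandwich the counting kernel $\chi_{[0,R]}(\rho)$ between smoothed kernels $k^{\pm}$ at scale $\delta$, and apply the pre-trace formula. The small eigenvalues give $M(X;z,w)$ up to $O(\delta X^{n-1})$. The remaining contribution is $\sum_{t_j>0} h^{\pm}(t_j)\phi_j(z)\overline{\phi_j(w)}$. By the standard asymptotics of the spherical transform in $\mathbb{H}^n$,
\begin{equation*}
h_R(t) \approx X^{\frac{n-1}{2}}\, \mathrm{Re}\bigl(c(t) X^{it}\bigr)\, |t|^{-\frac{n+1}{2}}\, \hat{\psi}(\delta t),
\end{equation*}
up to lower-order terms. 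I will choose $\delta = X^{-\frac{n-1}{2}}$, so that $\delta X^{n-1}$ is already of the target size $X^{\frac{n-1}{2}}$. The problem then reduces to bounding, for dyadic $T \le \delta^{-1+\epsilon'}$ (only $O(\log X)$ of them), the sums $\mathfrak{B}(T,X;z,w)$ with weight $h$ of size $X^{\frac{n-1}{2}} T^{-\frac{n+1}{2}}$.

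\emph{Step 2: $L^2$ bound in $z$.} Fix $w$. By orthonormality in $z$ and the local Weyl law $\sum_{T\le t_j<2T}|\phi_j(w)|^2 \asymp T^{n}$,
\begin{equation*}
\|\mathfrak{B}(T,2\cosh R;\cdot,w)\|_{L^2}^2 \ll X^{n-1} T^{-(n+1)} T^{n} = X^{n-1} T^{-1}.
\end{equation*}
Differentiating in $R$ brings down a factor $t_j \asymp T$, so $\|\partial_R \mathfrak{B}\|_{L^2}^2 \ll X^{n-1} T$ (also counting the harmless derivative of $X^{\frac{n-1}{2}}$).

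\emph{Step 3: maximal estimate.} On $R \in [R_0, R_0+1]$, apply the modified Sobolev inequality, Lemma \ref{BSE}:
\begin{equation*}
\sup_R |f|^2 \ll \int |f|^2\,dR + \Bigl(\int |f|^2\,dR\Bigr)^{1/2} \Bigl(\int |\partial_R f|^2\,dR\Bigr)^{1/2}.
\end{equation*}
Integrating over $z$ as in Theorem \ref{maximalestimate}, the weights $T^{-1}$ and $T$ combine to $T^{0}$. This gives
\begin{equation*}
\Bigl\| \sup_{R\in[R_0,R_0+1]} X^{-\frac{n-1}{2}} |\mathfrak{B}| \Bigr\|_{L^2_z}^2 \ll 1
\end{equation*}
for each dyadic $T$. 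This $T^0$ balance is exactly why the modified inequality is needed rather than the classical Sobolev bound. Summing over the $O(R_0)$ dyadic blocks by Cauchy--Schwarz, the normalized maximal function of the full error is bounded in $L^2_z$ by $O(R_0^2)$.

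\emph{Step 4: Chebyshev and Borel--Cantelli.} Take the threshold $\lambda_{R_0} = R_0^{3/2}(\log R_0)^{1/2+\epsilon}$. Chebyshev gives
\begin{equation*}
\mathrm{vol}\{z : \sup > \lambda_{R_0}\} \ll R_0^2/\lambda_{R_0}^2 = R_0^{-1}(\log R_0)^{-1-2\epsilon}.
\end{equation*}
This is summable over integers $R_0$, so Borel--Cantelli yields a full-measure set $B_w$ on which the bound in the statement eventually holds; intersecting over rational $\epsilon$ removes the dependence of $B_w$ on $\epsilon$. Passing from the smoothed counts to $N$ uses monotonicity, and the other error sources already have the right size.

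\emph{Main obstacle.} The delicate step is Step 1 in general dimension. One needs uniform asymptotics of the spherical transform of the smoothed ball indicator in $\mathbb{H}^n$, including the Harish-Chandra $c$-function behaviour and the derivative in $R$. The lower-order terms of $h$ must carry at least the same decay $T^{-\frac{n+1}{2}}$ and must not spoil the $T^0$ balance in Step 3. For the large-$t$ tail, $T > \delta^{-1}$, I will use the rapid decay of $\hat{\psi}(\delta t)$ to bound it trivially.
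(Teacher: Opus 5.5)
Your proposal is correct and follows essentially the paper's route. The steps match: smoothing, then orthogonality plus the local Weyl law for $L^2_z$ bounds of size $T^{-1}$ and $T$ on a dyadic block and its $R$-derivative, then the product-form Sobolev inequality for an $O(1)$ maximal bound per block, $O(R)$ blocks giving $O(R^2)$, and finally Chebyshev with threshold $R^{3/2}(\log R)^{1/2+\epsilon}$ and Borel--Cantelli. The paper differs only cosmetically: it uses the ball kernel with $\delta_\ell=e^{-(n-1)(\ell+1)/2}$ frozen on each unit interval and sums all dyadic blocks against the weights $Q_{\ell,T}$, where you use a smooth bump with a truncation.

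Two small slips. First, the range handled by the maximal estimate must be $T\le\delta^{-1-\epsilon'}$, not $T\le\delta^{-1+\epsilon'}$, because the trivial bound on a block with $T\approx\delta^{-1}$ is larger than the target by a power of $X$; this is harmless, since the maximal estimate applies to every block. Second, fix that block range (or $\delta$ itself) on each unit $R$-interval, so that Sobolev is applied to a fixed smooth function of $R$.
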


\begin{remark}
As in $2$ dimensions, Theorem \ref{higherdim} holds for a general cofinite lattice $\Gamma$.
\end{remark}

\begin{remark}
Theorem \ref{higherdim} and lower bound \eqref{lowerboundphillipsrudnick} indicate a crucial dichotomy between the diagonal $z=w$ and the non-diagonal $z\neq w$ case in higher dimensions, where the behavior of the error term remains more mysterious. We refer to \cite{cherubinikatsivelos, hillparnovski, katsivelos, laxphillips, PhillipsRudnick:1994} and references therein for more detailed discussions on the higher dimensional problem. 
\end{remark}

\subsection{Limitations of our method}

The argument of our proof guarantees that the conjectural upper bound \eqref{mainconjecture} holds for almost all pairs of points $(z,w)$. Nevertheless, it cannot guarantee the bound for any \textit{specific} preassigned pair of points. The set $B_w$ in Theorem \ref{ourmainthm1} does in general depend on $w$, and as such, does not produce any explicit point $z \in B_w$, a single full-measure set valid for all $w$ or any information for the diagonal case $z=w$. The last case remains as the most difficult one, and new ideas are necessary in order to treat the general case. The same conclusion holds for Theorems \ref{cramercloser} and \ref{higherdim}.

\begin{remark}
For a complete study of the hyperbolic lattice counting problems, we refer also to \cite{biro1, Biro:2024, Chatzakos:2017, Cherubini:2018, CherubiniRisager:2018, cherubinikatsivelos, gunther, katsivelos, laxphillips, PetridisRisager:2017, PetridisRisager:2018a} for various important results and modifications for lattice counting problems on classical hyperbolic spaces of rank one, and to \cite{BlomerLutsko:2024, duke, eskinmcmullen, gorodnik} for lattice counting problems in homogeneous spaces of higher rank. 
\end{remark}

\subsection{AI Declaration} The main idea in the proof of the pointwise imrpovement (Theorem \ref{ourmainthm1}) is the use of the Sobolev inequality to obtain a maximal $L^2$ estimate in the radius variable from which the result follows by a standard application of the Borel-Cantelli Lemma. This idea was obtained after extensive interactions with ChatGPT-5.6 Sol. The proof of \ref{cramerfailure} was obtained exclusively by ChatGPT-5.6 Sol. The authors formulated the project, evaluated and revised the model’s suggestions, independently verified all mathematical arguments, and wrote the final manuscript. The authors take full responsibility for the correctness of all statements and proofs in this paper.

\subsection{Acknowledgments} We would like to thank Yiannis Petridis for useful conversations and for constant support for the duration of this project. We also thank Giacomo Cherubini for useful comments and Morten Risager for his support. 
The first author is supported by the Hellenic Foundation for Research and Innovation (H.F.R.I.) under the “3rd Call for H.F.R.I. Research Projects to support Faculty Members \& Researchers” (Project Number: 25622). The second author would like to thank Aristomenis Papadopoulos for helping him formally verifying parts of the proofs of the paper and for many fruitful conversations.

\section{Spectral theoretic background} \label{section2}

In this section we set the necessary spectral background. The material included in this section is standard, but we include a detailed presentation for reasons of completeness. 

\subsection{The automorphic kernel and the spherical transform} 

We denote by $-\Delta_{\Gamma}$ the positive automorphic Laplacian of the Riemann surface $\Gamma \backslash \mathbb{H}^2$ and by $\{ \phi_j \}_{j=0}^{\infty}$ an orthonormal basis for the discrete spectrum of $-\Delta_{\Gamma}$ with eigenvalues $ \{\lambda_j \}_{j=0}^{\infty}$ in increasing order $\lambda_0 = 0 < \lambda_1 \leq \lambda_2 \leq ...$ and $\lambda_j \to \infty$.
We also use the standard notation  $\lambda_j = s_j (1-s_j) =  1/4 +t_j^2$. The small eigenvalues $\lambda_j <1/4$ correspond to parameters $ 0 \leq s_j \leq 1$ and $|\Im(t_j)| \leq 1/2$. Large eigenvalues $\lambda_j \geq 1/4$ correspond to $\Re(s_j)=1/2$, equivalently to $t_j \geq 0$. Recall also the change of variable $X = 2 \cosh R \sim e^R$.

Without loss of generality assume $R\geq 1$. Let $k$ be the characteristic kernel depending on the hyperbolic distance between two points, given explicitly by
\begin{eqnarray} \label{kernel}
 k_R (u(z, w)) = k_R (u) = \left\{ \begin{array}{rcl}
1, & \mbox{for} &  u \leq (\cosh R -1)/2,
\\ 0, & \mbox{for} & u > (\cosh R -1)/2,
\end{array} \right. 
\end{eqnarray}
where $u(z,w)$ is given by \eqref{pointpairinvariant}. The automorphic kernel 
\begin{equation*}
K_R (z,w) =  K(z,w)=\sum_{\g\in \G}k(u(z,\gamma w)) 
\end{equation*}
satisfies $K(z,w) = N(X;z, w) $. Applying Selberg's pre-trace formula \cite[Theorem 7.4]{Iwaniec:2002} to $K(z,w)$ we see that the spherical transform $h_R(t)$ of $k(u) = k_R (u) $ does not decay quickly enough for this to be applied directly, because the decay of $h_R(t)$ in the $t$-variable is not strong enough to analyze effectively the right-hand side of the pre-trace formula. However, we can study the counting function approximating smoothly the automorphic kernel $K(z,w)$. 

The spherical (or Selberg/Harish-Chandra) transform $h_R(t)$ of the characteristic function is given in terms of the associated Legendre function of the first kind 
\begin{equation} \label{eq:hRdef}
  h_R(t)= 2\pi\sinh(R)P_{-\frac{1}{2}+it}^{-1}(\cosh R),
\end{equation}
(see \cite[Equation (2.7)]{Chamizo:1996a}). The following lemma summarizes some basic properties of $h_R(t)$ and $\partial_R \, h_R(t)$ (the partial derivative in the $R$-variable), for $R \to \infty$ and for $R$ very small. 
 \begin{lemma} \label{sphericalproperties}
The spherical transform $h_R(t)$ is entire in $t \in \mathbb{C}$ and satisfies
\begin{equation*}
h_R(t) =  O \left((1+R) \, e^{R/2+R\abs{\Im(t)}}\right). 
\end{equation*}
Moreover:
\\
(a) for real $|t| \geq 1, R \geq 1$ we have
\begin{equation*} 
  h_R(t)=O\left(\frac{e^{R/2}}{(1+{\abs{t})}^{3/2}}\right), \ \ \ \ 
  \partial_R \, h_R(t) = \left( \frac{e^{R/2}}{(1+{\abs{t})}^{1/2}}\right),
\end{equation*}
(b) for real $t \neq 0$ and small $\delta>0$ we have 
\begin{equation*} 
m_{\delta}(t):= \frac{h_{\delta}(t)}{4\pi\sinh^2(\delta/2)} = O\left(\frac{1}{(1+{\abs{\delta t})}^{3/2}}\right). 
\end{equation*}
\end{lemma}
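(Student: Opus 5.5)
The plan is to work directly with the integral representation of the spherical transform of the indicator kernel \eqref{kernel}, rather than with special-function identities. For a radial kernel $k(u)$, Selberg/Harish-Chandra gives
\[
h(t)=\int_{\mathbb{H}^2} k(u(z,i))\, \Im(z)^{1/2+it}\, d\mu(z),
\]
equivalently the Abel transform $Q(v)=\int_v^\infty k(u)(u-v)^{-1/2}du$, $g(r)=Q(\sinh^2(r/2))$, $h(t)=\int_{\mathbb R} g(r)e^{irt}dr$. For $k=k_R$ this gives $g_R(r)=2\sqrt{2}\,(\cosh R-\cosh r)^{1/2}\mathbf 1_{|r|\le R}$, up to a harmless normalising constant. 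Hence
\[
h_R(t)=2\sqrt2\int_{-R}^{R}(\cosh R-\cosh r)^{1/2}\cos(rt)\,dr .
\]
This formula makes entireness in $t$ immediate, since we integrate a bounded function over a compact interval against $e^{irt}$.

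\textbf{General bound.} Bound $|\cos(rt)|\le e^{|r||\Im t|}\le e^{R|\Im t|}$ and $(\cosh R-\cosh r)^{1/2}\le (\cosh R)^{1/2}\ll e^{R/2}$. Integrating over an interval of length $2R$ gives $O((1+R)e^{R/2+R|\Im t|})$.

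\textbf{Part (a).} Here $t$ is real with $|t|\ge1$, and the decay comes from integrating by parts around the endpoint singularities at $r=\pm R$.

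For $\partial_R h_R$, differentiate under the integral; the boundary term vanishes because the integrand vanishes at $r=\pm R$. This gives
\[
\partial_R h_R(t)=\sqrt2\,\sinh R\int_{-R}^R(\cosh R-\cosh r)^{-1/2}\cos(rt)\,dr .
\]
The weight $(\cosh R-\cosh r)^{-1/2}$ has an inverse-square-root singularity at the endpoints and is otherwise smooth and monotone on $[0,R]$. Near $r=R$ write $\cosh R-\cosh r\approx \sinh R\,(R-r)$.
\begin{itemize}
\item The endpoint piece is $(\sinh R)^{-1/2}\int_0^{\cdot}s^{-1/2}e^{ist}ds$, a Fresnel-type integral of size $O((\sinh R)^{-1/2}|t|^{-1/2})$. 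Multiplied by $\sinh R$, it contributes $O(e^{R/2}|t|^{-1/2})$.
\item The interior part near $r=0$, where $(\cosh R-\cosh r)^{-1/2}\approx e^{-R/2}$, is handled by one integration by parts. It gains $|t|^{-1}$ times the total variation of the weight, which is $O(e^{-R/2})$ away from the endpoint cutoff, so it is smaller.
\end{itemize}
To make this rigorous, use a smooth partition of unity separating $|R-|r||\le 1$ from the rest. On the endpoint piece, substitute $s=R-r$ and apply the standard estimate for $\int s^{-1/2}\psi(s)e^{ist}ds$ with $\psi$ smooth and having derivatives uniformly bounded in $R$ after rescaling. This gives the stated $\partial_R h_R(t)=O(e^{R/2}(1+|t|)^{-1/2})$.

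For $h_R$ itself, integrate by parts once in $r$. The derivative $\tfrac{d}{dr}(\cosh R-\cosh r)^{1/2}=-\tfrac12\sinh r\,(\cosh R-\cosh r)^{-1/2}$ has exactly the same endpoint structure as above. So $h_R(t)=t^{-1}\times$(an integral of the same type as for $\partial_R h_R$), giving $O(e^{R/2}|t|^{-3/2})$. Alternatively, one could use the known asymptotic of $P^{-1}_{-1/2+it}(\cosh R)$ from \eqref{eq:hRdef}. \textbf{I expect the main obstacle} to be keeping the endpoint analysis uniform in both $R\ge1$ and $|t|\ge1$, in particular the regime $|t|\lesssim 1$ relative to the endpoint scale; the smooth partition with $R$-independent cutoffs handles this.

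\textbf{Part (b).} Use the same formula with $R=\delta$ small, and rescale $r=\delta x$. Since $\cosh\delta-\cosh(\delta x)\approx \tfrac{\delta^2}{2}(1-x^2)$, we get
\[
h_\delta(t)\approx 2\delta^2\int_{-1}^1(1-x^2)^{1/2}\cos(\delta t x)\,dx .
\]
Up to a smooth, uniformly controlled perturbation, this is $\delta^2$ times a Bessel-type transform $\propto J_1(\delta t)/(\delta t)$. Dividing by $4\pi\sinh^2(\delta/2)\asymp\delta^2$, and using $|J_1(y)/y|\ll(1+|y|)^{-3/2}$, gives part (b). As a cross-check, the same endpoint integration by parts applied to $(1-x^2)^{1/2}$ produces exactly the decay $(1+|\delta t|)^{-3/2}$.
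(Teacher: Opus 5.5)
Your proposal is correct, and for the only estimate the paper proves itself (the bound on $\partial_R h_R$) it is the same argument: start from $h_R(t)=4\sqrt{2}\int_0^R\sqrt{\cosh R-\cosh r}\,\cos(tr)\,dr$, use $\cosh R-\cosh r\asymp (R-r)\sinh R$ near the endpoint to get a contribution $O(e^{-R/2}t^{-1/2})$ there, and integrate by parts once in the interior; the only difference is that the paper cuts sharply at $r=R-1/t$ and uses monotonicity of $(\cosh R-\cosh r)^{-1/2}$, where you use a smooth unit-scale partition and a Fresnel-type lemma. The paper simply quotes the other bounds from Chamizo's Lemma~2.4, while you derive them directly; in (b), carry the factor $(1+O(\delta^2))^{1/2}$ coming from $\cosh\delta-\cosh(\delta x)=\tfrac{\delta^2}{2}(1-x^2)(1+O(\delta^2))$ as a smooth multiplier through the endpoint analysis, because treating it as an additive $O(\delta^2)$ error would not give $(1+\delta|t|)^{-3/2}$ decay for large $|t|$.
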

\begin{proof}
The proof follows immediately from \cite[Lemma~2.4]{Chamizo:1996a}. The only non-trivial estimate is the second estimate of part (a), which can be deduced from the explicit equation of the spherical transform (see \cite[eq.~(2.6)]{Chamizo:1996a})
\begin{equation*}
h_R(t) =  4 \sqrt{2} \int_{0}^{R} \sqrt{\cosh R - \cosh r} \cos(tr) \, dr.
\end{equation*}
Differentiating the integrable endpoint singularity gives
\begin{equation} \label{derivativeintegral}
\partial_R h_R(t) =  2 \sqrt{2} \sinh R \int_{0}^{R} \frac{\cos(tr)}{\sqrt{\cosh R - \cosh r}}  \, dr.
\end{equation}
Since $h_R(t)$ is even, we can assume that $t \geq 1$. For $v \in (0,1]$ and $R \geq 1$ we have
\begin{equation*}
\cosh R - \cosh (R-v) \asymp v \sinh R.
\end{equation*}
We conclude that for $r \in [R-1/t, R]$ the contribution of the integral in \eqref{derivativeintegral} is bounded by $O(e^{-R/2} t^{-1/2})$, whereas for $r \in [0, R-1/t]$ the function $f(r) = (\cosh R - \cosh r)^{-1/2}$ is increasing (notice that for $R=t=1$ the interval is empty). Using integration by parts we bound the contribution of the integral by 
\begin{equation*}
t^{-1} \left( f \left(R- \frac{1}{t}\right) + \int_{0}^{R-1/t} |f'(r)| dr\right) \leq 2 t^{-1} f \left(R- \frac{1}{t} \right)  \ll e^{-R/2} t^{-1/2}.
\end{equation*}
Multiplying by $ \sinh R$ completes the proof.
\end{proof}

\subsection{Smoothing} 

As we have already mentioned, we cannot apply the pre-trace formula directly to the automorphic kernel $K_R(z,w)$. For that reason, we define smooth approximations $K_{\pm}(z,w)$ of the kernel $K$ by smoothing out the kernel $k$. This gives better convergence on the spectral side, and allows us to apply the pre-trace formula for $K_{\pm}(z,w)$.

For any two kernels $k_1$ and $k_2$ let $k_3 = k_1 * k_2$ define their hyperbolic convolution, given by
\begin{equation} \label{convolution}
  (k_1* k_2) (u(z,w))=\int_{\mathbb{H}^2} k_1(u(z,v))k_2(u(v,w))d\mu(v).
\end{equation}
The spherical transform is multiplicative in the following sense: if we denote by $h_i(t)$ the spherical transform of $k_i$, then the spherical transform of \eqref{convolution} is the product of the corresponding spherical transforms:
\begin{equation}
  \label{eq:convolution}
  h_{3}(t)=h_{1}(t) \, h_{2}(t).
\end{equation}
(see \cite{Chamizo:1996a}).
Assume $0<\delta <1$ is a sufficiently small parameter. Similarly to the definition of $k_R$, we consider the (appropriately normalized) kernel
\begin{equation*}
  k_\delta(u)= \frac{1}{4\pi \sinh^2(\delta/2)} \mathbf{1}_{[0,(\cosh \delta-1)/2]}(u),
\end{equation*}
where by $\mathbf{1}_{A}$ we denote the characteristic function of $A$. This normalized kernel by definition satisfies
\begin{equation*}
\int_{\mathbb{H}^2} k_\delta(u(z,w))d\mu(z)=1.
\end{equation*}
Now let $X \gg 1$ and recall $R = \arccosh (X/2) >0$. We define two smooth approximations of the characteristic kernel
\begin{equation} \label{approximations}
  k^{\pm}(u)= \left(\mathbf{1}_{[0,(\cosh(R \pm \delta )-1)/2]}*k_\delta \right)(u).
\end{equation}
Using the triangle inequality for the metric $\rho$ we see (see also \cite[eq.~(5.4)]{PetridisRisager:2017}) that
\begin{equation}\label{ineqkpm}
  k^-(u(z,w))\leq k(u(z,w))\leq k^+(u(z,w)).
\end{equation}
By relations \eqref{eq:convolution} and \eqref{approximations} we find that the Selberg--Harish-Chandra transforms of $k^{\pm}$ are given by
\begin{equation*}
  h_{R,\delta}^{\pm} (t) :=\frac{h_{R \pm \delta}(t) \, h_\delta (t)}{4\pi\sinh^2(\delta/2)} = h_{R \pm \delta}(t) \, m_{\delta}(t).
\end{equation*}
In Lemma \ref{sphericalproperties} we have already analyzed the behavior of $h_R$ for $R$ large and of $m_{\delta}$ for $\delta$ small (see also \cite{PetridisRisager:2017}). Using part (a) of Lemma \ref{sphericalproperties} we find that, in the strip $\abs{\Im(t)}\leq 1-\e$, $h^{\pm}$ satisfies
\begin{equation*} 
h_{R,\delta}^{\pm}   (t)=O_{X,\delta}\left(\frac{1}{(1+\abs{t})^3}\right).
\end{equation*}
This justifies that we can use the pre-trace formula. Finally, working as in \cite[Equations (6.2)-(6.4)]{PetridisRisager:2018a} we derive the formulas
\begin{eqnarray}   \label{good-bounds-for-hpm}
 h_{R,\delta}^{\pm}     \left(\tfrac{i}{2}\right) &=& \pi X+O(1+\delta X) = \pi e^R + O \left(1+\delta e^{R}\right), \nonumber \\  
 h_{R,\delta}^{\pm} (t) &=& O(X^{1/2}\min(\abs{t}^{-3/2}, \delta^{-3/2}\abs{t}^{-3}, \log X)) \\
    &=& O(e^{R/2}\min(\abs{t}^{-3/2}, \delta^{-3/2}\abs{t}^{-3}, R)) . \nonumber
\end{eqnarray}

\subsection{Applying the pre-trace formula}

From \eqref{ineqkpm} we deduce that the automorphic kernels
\begin{equation*} 
  K_X^{\pm}(z,w):=\sum_{\g\in \G}k^{\pm}(u(z,\g w))
\end{equation*}
satisfy
\begin{equation*}
  K_X^-(z,w)\leq N(X;z,w)\leq K_X^+(z,w).
\end{equation*}
We can now apply Selberg's pre-trace formula  \cite[Theorem~7.4]{Iwaniec:2002} to the smooth automorphic kernels $K_X^{\pm}(z,w)$. Let
\begin{eqnarray*} 
M (X;z,w)&=&\sum_{\frac{1}{2} < s_j \leq 1} \sqrt{\pi} \frac{\Gamma \left(s_j-1/2\right)}{\Gamma(s_j+1)} \phi_j(z) \overline{\phi_j(w)} X^{s_j} \nonumber \\
&&+ \sum_{\lambda_j=1/4} h_R(0) \phi_j(z) \overline{\phi_j(w)}.
\end{eqnarray*}
This is the well understood main term of $N(X;z,w)$, which appears as the contribution of the small eigenvalues $\lambda_j \in [0,1/4]$ in the spectral expansion of $K_X$ (notice that in our notation we have included the contribution of the eigenvalue $\lambda_j=1/4$ in the main term). Selberg's pre-trace formula allows us to expand 
  \begin{equation} \label{kpmexpansion}
    K^\pm(X;z,w)= \sum_{\lambda_j}h^{\pm}(t_j)\phi_j(z)\overline{\phi_j(w)}.
  \end{equation}
Let us set the contribution of small eigenvalues in \eqref{kpmexpansion} as
\begin{equation*}
M^{\pm} (X;z,w) = \sum_{0 \leq \lambda_j  \leq 1/4}h^{\pm}(t_j)\phi_j(z)\overline{\phi_j(w)}.
\end{equation*}
A standard estimate (see \cite{Chamizo:1996a}) gives
\begin{equation*}
    M (X;z,w)-M ^{\pm}(X;z,w)=O\left(X \delta + X^{1/2} \log X\right).
\end{equation*}
The contribution of the eigenvalue $\lambda_j = 1/4$ (if present) is bounded using Lemma \ref{sphericalproperties}, since $h(0), h^{\pm} (0)  = O(R \, e^{R/2}) = O(X^{1/2} \log X)$. 
After defining the smooth approximations of the error term to be
\begin{equation*}
    E^{\pm} (X;z,w) = K^{\pm} (X;z,w) - M^{\pm}(X;z,w),
\end{equation*}
we derive 
\begin{equation} \label{crucialrelation}
    E (X;z,w) \ll E^{\pm}(X;z,w)+O\left(X\delta + X^{1/2} \log X\right).
\end{equation}
For the rest of the paper we focus on the contribution of the large eigenvalues in the smooth error terms $E^{\pm}(X;z,w)$, since these eigenvalues play the crucial role for our purposes. 

\subsection{The smooth error term and spectral exponential sums}

We split the contribution of the discrete spectrum in appropriate ranges, separately for $\abs{t_j} \leq 1$ and for $ 1< \abs{t_j}$; we split the later dyadically. After expanding, we need to bound the series
\begin{eqnarray} \label{discreteexp}
E^{\pm} (X;z,w) &=& \sum_{ t_j > 0}h^{\pm}(t_j)\phi_j(z)\overline{\phi_j(w)}  \nonumber \\ 
&=& \sum_{ t_j \leq 1}h^{\pm}(t_j)\phi_j(z)\overline{\phi_j(w)} +  \sum_{k=0}^{\infty} B^{\pm} (2^k, R;z,w), 
\end{eqnarray}
where we simplify notation setting 
\begin{equation} \label{Bsums}
B^{\pm} (T,R;z,w) = \mathfrak{B}^{\pm}  (T, 2 \cosh R;z,w) = \sum_{T \leq t_j < 2T} h_{R, \delta}^{\pm} (t_j) \phi_j(z)\overline{\phi_j(w)},
\end{equation}
defined similarly as in \eqref{BTXdef}. Notice that the contribution of $\abs{t_j} \leq 1$ is trivially bounded using Lemma \ref{sphericalproperties} by $O_{\Gamma,z,w} (X^{1/2} \log X)$, since it is a finite sum. From equations \eqref{crucialrelation} and \eqref{discreteexp} we conclude
\begin{equation*} 
    E (X;z,w) \ll \sum_{k=0}^{\infty} B^{\pm} (2^k, R;z,w)   +O\left(X\delta + X^{1/2} \log X\right).
\end{equation*}
Estimating the first sum will occupy the rest of our proof.

\section{The maximal estimate and the proof of Theorem \ref{ourmainthm1}}

Let us fix some extra notation. Recall that $R\geq 1$. We split the interval 
\begin{equation*}
[1, +\infty) = \bigcup_{n=1}^{\infty} I_n, \ \ \text{where} \ \ I_n= [n, n+1].
\end{equation*}
Instead of choosing $\delta$ smmothly on $R$, we can make the following simpler choice: for $R\in [n,n+1)$, let 
\begin{equation*}
\delta(R) = \delta_n = e^{-\frac{n+1}{2}}.
\end{equation*} 
Here increasing $R$ from $[n,n+1)$ to $[n+1,n+2)$ makes $\delta(R)$ jump; however this discontinuity does not affect the argument, but rather makes our proof simpler. The main result of this section is the following.

\begin{theorem}   \label{maximalestimate}
Fix $w\in \mathbb{H}^2$ and $\eta>0$. There exists a Borel set $B_{w,\eta}\subset \Gamma \backslash \mathbb{H}^2$ of full hyperbolic measure such that for all $z\in B_{w,\eta}$ the following estimate holds: 
\begin{equation} \label{maximalupperbound}
\left|\sum\limits_{t_j\geq 1} h^{\pm}_{R, \delta(R)}(t_j)\phi_j(z)\overline{\phi_j(w)}\right| \ll_{\Gamma,w,z,\eta} e^{\frac{R}{2}} \, R^{\frac{3}{2}} \, (\log(R))^{\frac{1}{2}+\eta},
\end{equation}
for \textbf{every} sufficiently large real $R$. 
\end{theorem}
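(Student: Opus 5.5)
Fix $w$, a dyadic scale $T=2^k$ and a unit interval $I_n=[n,n+1]$, on which $\delta(R)=\delta_n$ is constant. The plan is to control
\begin{equation*}
\sup_{R\in I_n}\abs{B^{\pm}(T,R;z,w)}
\end{equation*}
in $L^2(\Gamma\backslash\mathbb{H}^2, d\mu(z))$, and then pass from $L^2$ bounds to pointwise bounds by Chebyshev and Borel--Cantelli.

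\emph{Step 1: $L^2$ bounds in $z$.} By orthonormality of the $\phi_j$,
\begin{equation*}
\int_{\Gamma\backslash\mathbb{H}^2}\abs{B^{\pm}(T,R;z,w)}^2d\mu(z)=\sum_{T\le t_j<2T}\abs{h^{\pm}_{R,\delta_n}(t_j)}^2\abs{\phi_j(w)}^2 .
\end{equation*}
The same identity holds for $\partial_R B^{\pm}$, since $\delta_n$ does not depend on $R$ inside $I_n$. By Lemma \ref{sphericalproperties} and \eqref{good-bounds-for-hpm},
\begin{equation*}
\abs{h^{\pm}}\ll e^{R/2}T^{-3/2}\min(1,(\delta_nT)^{-3/2}),\qquad \abs{\partial_Rh^{\pm}}\ll e^{R/2}T^{-1/2}\min(1,(\delta_nT)^{-3/2}).
\end{equation*}
The local Weyl law gives $\sum_{T\le t_j<2T}\abs{\phi_j(w)}^2\ll_w T^2$. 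Hence, writing $F=B^{\pm}(T,\cdot;z,w)$ on $I_n$,
\begin{equation*}
\|F(R)\|_{L^2_z}^2\ll e^{n}T^{-1}m_n(T)^2,\qquad \|\partial_RF(R)\|_{L^2_z}^2\ll e^{n}T\,m_n(T)^2,
\end{equation*}
where $m_n(T)=\min(1,(\delta_nT)^{-3/2})$.

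\emph{Step 2: maximal estimate.} Use the modified one-dimensional Sobolev inequality (Lemma \ref{BSE}) on $I_n$:
\begin{equation*}
\sup_{R\in I_n}\abs{F(R)}^2\ll \int_{I_n}\abs{F}^2+\Big(\int_{I_n}\abs{F}^2\Big)^{1/2}\Big(\int_{I_n}\abs{F'}^2\Big)^{1/2}.
\end{equation*}
This is the Gallagher/Sogge form rather than the crude $\int\abs{F}^2+\int\abs{F'}^2$. Integrate in $z$, apply Cauchy--Schwarz to the cross term, and use Fubini. The factors $T^{-1}$ and $T$ then balance, giving
\begin{equation*}
\int\sup_{R\in I_n}\abs{B^{\pm}(T,R;z,w)}^2d\mu(z)\ll_w e^{n}m_n(T)^2 .
\end{equation*}
This balancing is the crucial cancellation of a power of $T$. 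Next let
\begin{equation*}
M_n(z)=\sup_{R\in I_n}\sum_{k}\abs{B^{\pm}(2^k,R;z,w)}.
\end{equation*}
The terms with $2^k\le\delta_n^{-1}$ number about $n$, and the terms beyond that decay geometrically. Cauchy--Schwarz over $k$, with weights placing the bulk on those $\asymp n$ scales, gives
\begin{equation*}
\int M_n^2\,d\mu\ll_w e^{n}n^2 .
\end{equation*}

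\emph{Step 3: Borel--Cantelli.} Define the exceptional sets
\begin{equation*}
A_n=\{z: M_n(z)>e^{n/2}n^{3/2}(\log n)^{1/2+\eta}\}.
\end{equation*}
By Chebyshev, $\mu(A_n)\ll n^{-1}(\log n)^{-1-2\eta}$, and $\sum_n\mu(A_n)$ converges. By Borel--Cantelli, almost every $z$ lies in only finitely many $A_n$; let $B_{w,\eta}$ be the set of such $z$. For $z\in B_{w,\eta}$ and every large $R\in I_n$ this yields \eqref{maximalupperbound}, since $n\asymp R$. Measurability of $M_n$ follows because the sup may be taken over rational $R$, by continuity of $F$ in $R$.

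\emph{Main obstacle.} The main obstacle is Step 2. The crude Sobolev bound would give $\int\sup\abs{F}^2\ll e^nT$, which is not summable in $k$. I would first verify that Lemma \ref{BSE} indeed produces the geometric mean of the two $L^2$ norms. I would also check that the $\delta_n^{-3/2}T^{-3}$ tail of $h^{\pm}$ survives differentiation in $R$. That holds because $\partial_R$ falls only on the factor $h_{R\pm\delta}$ and not on $m_\delta$.
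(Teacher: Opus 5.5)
Your proposal is correct and follows essentially the same route as the paper. Both arguments use orthogonality plus the local Weyl law to bound $B^{\pm}$ and $\partial_R B^{\pm}$ in $L^2_z$ on each $I_n$, the product-form Sobolev inequality (Lemma \ref{BSE}) to get the dyadic maximal bound with the $T^{-1}\cdot T$ cancellation, summation over the $\asymp n$ effective dyadic scales, and then Chebyshev and Borel--Cantelli with threshold $n^{3/2}(\log n)^{1/2+\eta}$. The differences are cosmetic: you keep the factor $e^{n}$ instead of normalizing by $e^{-R/2}$, and you bound $\sup_R\sum_k\abs{B^{\pm}}$ by weighted Cauchy--Schwarz over $k$ rather than $\sup_R\abs{\sum_k B^{\pm}}$ by Minkowski and Fatou. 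Both give the same $e^{n}n^2$ bound, and you should intersect the good sets for the two signs, as the paper does with $E^{+}_{n,\eta}\cup E^{-}_{n,\eta}$.
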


The proof of Theorem \ref{maximalestimate} occupies the rest of this section. Clearly, the upper bound \eqref{maximalupperbound}
is equal to $X^{1/2} (\log X)^{3/2} (\log \log X)^{1/2+\eta}$.

\subsection{$L^2$-bounds for dyadic spectral sums}
In this subsection we prove $L^2$-bounds for the dyadic spectral sums $B_n(T,R)$ and $\partial_R B_n(T,R)$ (defined in \eqref{Bnpm}). For $R\in I_n$ define the normalized spherical transform
\begin{equation} \label{normalizedspherical}
c^{\pm}_{n,t}(R) := e^{-\frac{R}{2}}h^{\pm}_{R,\delta_n}(t) 
\end{equation}
and the weight
\begin{equation*} 
  Q_{n,T} := (1+\delta_n T)^{-\frac{3}{2}}.   
\end{equation*}
\begin{lemma}\label{cpmdcpm}
The following estimates hold uniformly for $t\geq 1$ and for $R \in I_n$:
\begin{equation*}
\begin{split}
|c_{n,t}^{\pm}(R)| &\ll t^{-\frac{3}{2}}Q_{n,t},\\\\
|\partial_R c^{\pm}_{n,t}(R)| &\ll t^{-\frac{1}{2}}Q_{n,t}. 
\end{split}
\end{equation*}
\end{lemma}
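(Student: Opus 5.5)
We start from the factorization $h^{\pm}_{R,\delta_n}(t) = h_{R\pm\delta_n}(t)\, m_{\delta_n}(t)$, which gives
\begin{equation*}
c^{\pm}_{n,t}(R) = e^{-R/2} h_{R\pm\delta_n}(t)\, m_{\delta_n}(t).
\end{equation*}
Here $\delta_n$ is constant on $I_n$, so differentiating in $R$ does not touch the factor $m_{\delta_n}(t)$. Hence
\begin{equation*}
\partial_R c^{\pm}_{n,t}(R) = m_{\delta_n}(t)\left( e^{-R/2}\,(\partial_R h)_{R\pm\delta_n}(t) - \tfrac12 e^{-R/2} h_{R\pm\delta_n}(t)\right).
\end{equation*}
Both estimates then reduce to bounding three quantities separately: the factor $m_{\delta_n}(t)$, the function $h$ evaluated at the shifted radius $R\pm\delta_n$, and its $R$-derivative at that radius.

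The factor $m_{\delta_n}$ is handled by Lemma \ref{sphericalproperties}(b) with $\delta=\delta_n$ small and $t\geq 1$ real, which gives
\begin{equation*}
|m_{\delta_n}(t)| \ll (1+\delta_n t)^{-3/2} = Q_{n,t}.
\end{equation*}
The implied constant is absolute, because part (b) is uniform in small $\delta$.

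For the shifted radius, note that $R\in I_n$ and $0<\delta_n<1$, so $R' := R\pm\delta_n$ lies in $[n-1,n+2]$. To apply Lemma \ref{sphericalproperties}(a) we need $R'\geq 1$. This holds once $n\geq 2$, or once $R\geq 1+\delta_1$. The finitely many remaining small values of $R$ can be absorbed into the constant, using continuity of $h_{R}(t)$ in $R$ together with the explicit integral representations, which still give the same $t$-decay for $R'$ in a compact range bounded away from $0$. Lemma \ref{sphericalproperties}(a) then gives
\begin{equation*}
|h_{R'}(t)| \ll e^{R'/2} t^{-3/2}, \qquad |\partial_{R'}h_{R'}(t)| \ll e^{R'/2} t^{-1/2}.
\end{equation*}
Since $e^{R'/2} = e^{R/2} e^{\pm\delta_n/2} \asymp e^{R/2}$, the weight $e^{-R/2}$ cancels the exponential growth up to an absolute constant.

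Combining the pieces, $|c^{\pm}_{n,t}(R)| \ll t^{-3/2} Q_{n,t}$. For the derivative we get
\begin{equation*}
|\partial_R c^{\pm}_{n,t}(R)| \ll Q_{n,t}\left(t^{-1/2} + t^{-3/2}\right) \ll t^{-1/2} Q_{n,t}
\end{equation*}
for $t\geq 1$.

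The only delicate point is uniformity. The constants in Lemma \ref{sphericalproperties} must be independent of $R\geq 1$ and of $\delta$, and we must ensure that the shifted radius $R-\delta_n$ does not drop below the range where part (a) is valid. The first issue is settled by inspecting the proof of part (a): the integration-by-parts bound there uses only $\cosh R-\cosh(R-v)\asymp v\sinh R$ uniformly for $R\geq 1$, and a slightly smaller threshold such as $R\geq 1/2$ works just as well. The second issue is handled by the small-$n$ adjustment above.
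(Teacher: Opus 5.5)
Your proposal is correct and follows essentially the same route as the paper: write $c^{\pm}_{n,t}(R)=e^{-R/2}h_{R\pm\delta_n}(t)\,m_{\delta_n}(t)$, differentiate only the $h$-factor because $\delta_n$ is constant on $I_n$, and apply Lemma \ref{sphericalproperties}(a),(b) using $e^{\pm\delta_n/2}\asymp 1$. Your additional check that the shifted radius $R-\delta_n$ may fall slightly below $1$ when $n=1$ addresses a point the paper passes over silently, and your fix (uniform bounds for $R'$ in a compact range bounded away from $0$) is adequate.
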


\begin{proof}
The first inequality follows directly from parts (a) and (b) of Lemma \ref{sphericalproperties}.  In order to obtain the derivative estimate, we use \eqref{normalizedspherical} to write
\begin{equation*}
\partial_R c^{\pm}_{n,t}(R) = -\frac{1}{2}c_{n,t}^{\pm}(R) + e^{-\frac{R}{2}}(\partial_Hh_H)(t)|_{H = R\pm \delta_n}  m_{\delta_n}(t).
\end{equation*}
The full estimate follows from the first estimate and the second bound of part (a) of Lemma \ref{sphericalproperties}, combined with bounds \eqref{good-bounds-for-hpm}.
\end{proof}
Now, for $k\geq 0$ and  $T=2^k$ define the following convoluted dyadic sums, which are modifications of the dyadic sums \eqref{Bsums}:
\begin{equation} \label{Bnpm}
B_n^{\pm}(T,R; z,w) := \sum\limits_{T\le t_j<2T} c_{n,t_j}^{\pm} (R) \,  \phi_j(z) \, \overline {\phi_j(w)}. 
\end{equation}
We have the following $L^2$-bounds for these convoluted dyadic sums (considered as functions of $z$: $B_n^{\pm}(T,R;w) := B_n^{\pm}(T,R; \cdot,w)$).
\begin{proposition}\label{L2bounds}
For every $n\geq 1$, every dyadic $T\geq 1$ and every $R\in I_n$ the following bounds hold: 
\begin{equation*}
\begin{split}
\|B_n^{\pm}(T,R; w)\|_2^2:=& \|B_n^{\pm}(T,R; w)\|_{L^2(\Gamma \backslash \mathbb{H}^2)}^2 \ll_{\Gamma} T^{-1}Q_{n,T}^2, \\\\
\|\partial_RB_n^{\pm}(T,R; w)\|_2^2:=& \|\partial_RB_n^{\pm}(T,R; w)\|_{L^2(\Gamma \backslash \mathbb{H}^2)}^2 \ll_{\Gamma} TQ_{n,T}^2.
\end{split}
\end{equation*}
\begin{proof}
For the first bound we compute the $L^2$-norm by
\begin{eqnarray*}
\|B_n^{\pm}(T,R; w)\|_{L^2(\Gamma \backslash \mathbb{H}^2)}^2 &=& \int_{\Gamma \backslash \mathbb{H}^2} \sum\limits_{T\le t_j<2T} \sum\limits_{T\le t_l<2T} c_{n,t_j}^{\pm}(R)c_{n,t_l}^{\pm}(R) \phi_j(z)\overline{\phi_l(z)}\overline{\phi_j(w)}\phi_l(w) \,d\mu(z)\\
&=& \sum\limits_{T\le t_j<2T}|c_{n,t_j}^{\pm}(R)|^2|\phi_j(w)|^2,
\end{eqnarray*}
using the fact that $\{\phi_j(z)\}_j$ is an orthonormal basis. From Lemma \ref{cpmdcpm} we get the inequality 
\begin{equation*}
|c_{n,t_j}^{\pm}(R)|^2 \ll T^{-3}Q_{n,T}^2,
\end{equation*}
and therefore 
\begin{eqnarray*}
\|B_n^{\pm}(T,R; w)\|_{L^2(\Gamma \backslash \mathbb{H}^2)}^2 &\ll_{\Gamma}& T^{-3}Q_{n,T}^2 \sum\limits_{T\le t_j<2T} |\phi_j(w)|^2 \\
&\ll_{\Gamma, w}& T^{-3}Q_{n,T}^2T^2 \\
&=& T^{-1}Q_{n,T}^2,
\end{eqnarray*}
where the last inequality is obtained by applying the local Weyl law on the interval $[T,2T)$. An identical orthogonality computation gives us 
\begin{equation*}
\|\partial_RB_n^{\pm}(T,R; w)\|_{L^2(\Gamma \backslash \mathbb{H}^2)}^2 = \sum\limits_{T\le t_j<2T}|\partial _Rc_{n,t_j}^{\pm}(R)|^2|\phi_j(w)|^2.
\end{equation*}
Using the second inequality from Lemma \ref{cpmdcpm} and the local Weyl law gives the required bound. 
\end{proof}
\end{proposition}

\subsection{The unit interval Sobolev estimate}

A key part of the proof is the following Sobolev-type estimate. As mentioned in Remark \ref{remark1.5}, such estimates have already been employed in analytic number theory.  
\begin{lemma}\label{BSE}
Let $I\subset R$ be a closed interval of unit length and $F \in H^1(I,\C):= L^{2,1}(I,\C)$. Then 
\begin{equation}\label{BSEineq}
\sup\limits_{r\in I} |F(r)|^2 \le \int_I|F(u)|^2\,du + 2\left(\int_I|F(u)|^2\,du\right)^{\frac{1}{2}}\left(\int_I |F'(u)|^2\,du\right)^{\frac{1}{2}}.
\end{equation}
\end{lemma}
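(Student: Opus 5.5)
We prove the Sobolev inequality \eqref{BSEineq} through the fundamental theorem of calculus applied to $|F|^2$, combined with a mean value choice of base point. Since $F\in H^1(I,\C)$, $F$ has an absolutely continuous representative on the closed interval $I$. For this representative, $G(u):=|F(u)|^2 = F(u)\overline{F(u)}$ is absolutely continuous, and
\begin{equation*}
G'(u) = 2\,\Re\big(F'(u)\overline{F(u)}\big)
\end{equation*}
almost everywhere. Hence $|G'|\le 2|F||F'|$ a.e., and $G'\in L^1(I)$ by Cauchy--Schwarz. We work with this representative throughout; the supremum in \eqref{BSEineq} is understood for it, equivalently as an essential supremum.

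First, since $|I|=1$, $G$ is continuous and $\int_I G(u)\,du$ is its average over $I$. By the mean value theorem for integrals there is a point $r_0\in I$ with
\begin{equation*}
G(r_0)=\int_I |F(u)|^2\,du .
\end{equation*}
Second, fix any $r\in I$ and write
\begin{equation*}
G(r) = G(r_0) + \int_{r_0}^{r} G'(u)\,du .
\end{equation*}
Bounding the integral by $\int_I |G'|$, and that by $2\int_I |F||F'|$, we get
\begin{equation*}
G(r) \le \int_I |F|^2 + 2\int_I |F(u)|\,|F'(u)|\,du .
\end{equation*}
Third, apply Cauchy--Schwarz to the last integral:
\begin{equation*}
\int_I |F||F'| \le \Big(\int_I |F|^2\Big)^{1/2}\Big(\int_I |F'|^2\Big)^{1/2}.
\end{equation*}
Taking the supremum over $r\in I$ then yields \eqref{BSEineq}.

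There is no real obstacle. The only point needing care is the regularity justification: that $|F|^2$ is absolutely continuous and that the product rule holds a.e. for $H^1$ functions on an interval. If one prefers to avoid this, one can prove the inequality for $F\in C^1(I)$ and pass to the limit by density of $C^1(I)$ in $H^1(I)$. Both sides of \eqref{BSEineq} are continuous in the $H^1$ norm, since the supremum norm is controlled by the $H^1$ norm via the one-dimensional embedding, or directly by the $C^1$ case of the very inequality being proved. The unit length of $I$ is used only in the identity $G(r_0)=\int_I G$; for general length $\ell$ the first term would carry a factor $\ell^{-1}$.
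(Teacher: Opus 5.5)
Your proof is correct and is essentially the paper's argument: you pick $r_0$ with $|F(r_0)|^2\le\int_I|F|^2$ (you get equality via the mean value theorem), integrate $\frac{d}{du}|F|^2=2\,\Re\bigl(F'\overline{F}\bigr)$ from $r_0$ to $r$, and finish with Cauchy--Schwarz. Your justification via the absolutely continuous representative makes explicit a regularity point that the paper leaves implicit.
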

\begin{proof}
There exists $r_0\in I$ such that 
\begin{equation*}
|F(r_0)|^2 \le \int _I |F(u)|^2\,du. 
\end{equation*}
Since $F$ is absolutely continuous, for almost any $u\in I$ we get
\begin{equation*}
\frac{\,d}{\,du} |F(u)|^2 = 2\Re (F'(u)\overline{F(u)}). 
\end{equation*}
Integrating, applying the triangle inequality and then the Cauchy-Schwarz inequality we get 
\begin{equation*}
\begin{split}
|F(r)|^2 &\le |F(r_0)|^2 + 2\Bigg| \int\limits_{r_0}^r\Re(F'(u)\overline{F(u)})\,du\Bigg| \\
& \le \int_I |F(u)|^2\,du + 2\int |F'(u)||F(u)|\,du \\
&\le \int_I |F(u)|^2\,du + 2\|F(u)\|_2\|F'(u)\|_2.
\end{split}
\end{equation*}
which is the desired inequality \eqref{BSEineq}.
\end{proof}
\begin{remark}
We want to emphasize the fact that the more familiar Sobolev inequality 
\begin{equation*}
\|F\|_{\infty}^2 \ll \|F\|^2+ \|F'\|^2
\end{equation*}
is not good enough for our purposes and it is precisely the appearance of the product of the two norms on the right hand side instead of the sum of squares of the norms which allows for the miraculous cancellation $T^{-1/2}\cdot T^{1/2} =1$ described below. 
\end{remark}
\begin{proposition}[The maximal estimate]\label{mainineq}
Uniformly in $n$, $T$, $w\in \Gamma \backslash \mathbb{H}^2$ and the choice of sign, the following maximal estimate holds:
\begin{equation*}
\int_{\Gamma \backslash \mathbb{H}^2} \sup\limits_{R\in I_n}|B_n^{\pm}(T,R;z,w)|^2\,d\mu(z) \ll_{\Gamma} Q_{n,T}^2. 
\end{equation*}
\end{proposition}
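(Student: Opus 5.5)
Fix $n$, $T$, $w$ and a sign, and for each $z$ regard $F_z(R):=B_n^{\pm}(T,R;z,w)$ as a function of $R\in I_n$. The plan is to apply Lemma \ref{BSE} pointwise in $z$, integrate over $\Gamma\backslash\mathbb{H}^2$, and then insert the $L^2$-bounds of Proposition \ref{L2bounds}.

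First we check that $F_z\in H^1(I_n)$. For fixed $n$ the parameter $\delta_n$ is constant on $I_n$, so $c^{\pm}_{n,t}(R)=e^{-R/2}h_{R\pm\delta_n}(t)m_{\delta_n}(t)$ is $C^1$ in $R$ on $I_n$. The sum over $T\le t_j<2T$ is finite, so $F_z$ is $C^1$ on $I_n$. This also gives joint measurability in $(z,R)$, and the supremum over $I_n$ may be taken over rationals, so $z\mapsto\sup_{R\in I_n}|F_z(R)|^2$ is measurable.

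By Lemma \ref{BSE},
\begin{equation*}
\sup_{R\in I_n}|F_z(R)|^2\le \int_{I_n}|F_z(u)|^2\,du+2\Big(\int_{I_n}|F_z(u)|^2du\Big)^{1/2}\Big(\int_{I_n}|\partial_uF_z(u)|^2du\Big)^{1/2}.
\end{equation*}
Integrate in $z$ and apply Cauchy--Schwarz in $z$ to the product term. Then use Tonelli to interchange the $z$ and $u$ integrals. This gives
\begin{equation*}
\int\sup_{R\in I_n}|F_z|^2\,d\mu(z)\le \int_{I_n}\|B_n^{\pm}(T,u;w)\|_2^2\,du+2\Big(\int_{I_n}\|B_n^{\pm}(T,u;w)\|_2^2du\Big)^{1/2}\Big(\int_{I_n}\|\partial_uB_n^{\pm}(T,u;w)\|_2^2du\Big)^{1/2}.
\end{equation*}

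Proposition \ref{L2bounds} holds uniformly for $u\in I_n$, and $I_n$ has unit length. It bounds the first integral by $T^{-1}Q_{n,T}^2$ and the derivative integral by $TQ_{n,T}^2$. The right side is therefore
\begin{equation*}
\ll T^{-1}Q_{n,T}^2+(T^{-1}Q_{n,T}^2)^{1/2}(TQ_{n,T}^2)^{1/2}\ll Q_{n,T}^2,
\end{equation*}
since $T\ge1$. This is the cancellation $T^{-1/2}\cdot T^{1/2}=1$: the classical Sobolev inequality would instead give $TQ_{n,T}^2$.

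The one point that needs care is uniformity in $w$. Proposition \ref{L2bounds} is stated with constant $\ll_\Gamma$, but its proof uses the local Weyl law $\sum_{T\le t_j<2T}|\phi_j(w)|^2\ll T^2$. For cocompact $\Gamma$ this bound is uniform in $w$, because the quotient is compact and the local Weyl law holds uniformly there. Hence all implied constants depend only on $\Gamma$, as claimed, and nothing else in the argument is delicate.
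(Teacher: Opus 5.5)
Your proposal is correct and follows the paper's proof essentially verbatim: Lemma \ref{BSE} is applied pointwise in $z$, Cauchy--Schwarz in $z$ handles the cross term, Tonelli swaps the $z$ and $R$ integrals, and Proposition \ref{L2bounds} then gives $(T^{-1}+2T^{-1/2}T^{1/2})Q_{n,T}^2\ll Q_{n,T}^2$. Your extra checks are all valid and only make the argument more complete: $C^1$ regularity in $R$ (since $\delta_n$ is fixed on $I_n$), measurability of the supremum, and uniformity of the local Weyl law in $w$ on the compact quotient, which the paper's proof of Proposition \ref{L2bounds} passes over with an intermediate $\ll_{\Gamma,w}$.
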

We emphasize the importance of this estimate, as it is the key result that gives the uniform pointwise upper bound for the error term of the lattice counting problem.
\begin{proof} Let us define the non-negative functions 
\begin{equation*}
a(z) = \int_{I_n} |B_n^{\pm}(T,R;z,w)|^2\,dR
\end{equation*}
and 
\begin{equation*}
b(z) = \int_{I_n} |\partial_R B_n^{\pm}(T,R;z,w)|^2\,dR. 
\end{equation*}
The Sobolev inequality from Lemma \ref{BSE} and Cauchy-Schwarz give 
\begin{equation*}
\begin{split}
\int_{\Gamma \backslash \mathbb{H}^2} \sup\limits_{R\in I_n}|B_n^{\pm}(T,R;z,w)|^2\,d\mu(z) &\le \int_{\Gamma \backslash \mathbb{H}^2} a(z)\,d \mu(z) + 2\int_{\Gamma \backslash \mathbb{H}^2} a(z)^{\frac{1}{2}}b(z)^{\frac{1}{2}}\,d \mu(z) \\
&\le \int_{\Gamma \backslash \mathbb{H}^2} a(z)\,d \mu(z) \\ &+ 2\left(\int_{\Gamma \backslash \mathbb{H}^2} a(z)\,d \mu(z)\right)^{\frac{1}{2}}\left(\int_{\Gamma \backslash \mathbb{H}^2} b(z)\,d \mu(z)\right)^{\frac{1}{2}}. 
\end{split}
\end{equation*}
Now Tonelli's theorem allows us to interchange the order of integration so that 
\begin{equation*}
\begin{split}
&~~~~\int_{\Gamma \backslash \mathbb{H}^2} a(z)\,d \mu(z) + 2\left(\int_{\Gamma \backslash \mathbb{H}^2} a(z)\,d \mu(z)\right)^{\frac{1}{2}}\left(\int_{\Gamma \backslash \mathbb{H}^2} b(z)\,d \mu(z)\right)^{\frac{1}{2}} \\&=\int_{I_n} \|B_n^{\pm}(T,R;z,w)\|_2^2\,dR \\
&+ 2\left(\int_{I_n} \|B_n^{\pm}(T,R;w)\|^2_2\,dR \right)^{\frac{1}{2}}\left(\int_{I_n} \|\partial_RB_n^{\pm}(T,R;w)\|_2^2\right)^{\frac{1}{2}}\\
&\ll_{\Gamma}  (T^{-1} + 2T^{-1/2}\cdot T^{1/2}) \, Q_{n,T}^2 \ll_{\Gamma} Q_{n,T}^2,
\end{split}
\end{equation*}
where in the last step we used Proposition \ref{L2bounds}. 
\end{proof}

\subsection{Summation over the dyadic spectral intervals} 

Define 
\begin{equation*}
S_n^{\pm}(R;z,w) := \sum \limits_{k=0}^{\infty} B_n^{\pm}(2^k,R;z,w)
\end{equation*}
and
\begin{equation*}
G_n^{\pm}(z,w) = \sup\limits_{R\in I_n} |S_n^{\pm}(R;z,w)|.
\end{equation*}
For fixed $n$ we can use Lemma \ref{cpmdcpm}, Cauchy-Schwarz and the local Weyl law to bound the absolute contribution of each dyadic sum 
\begin{equation*}
\begin{split}
|B_n^{\pm}(T,R;z,w)| &= \left|\sum\limits_{T\le t_j<2T} c_{n,t_j}^{\pm}(R)\phi_j(z)\overline {\phi_j(w)} \right|\\
&\ll_{\Gamma} T^{-\frac{3}{2}}Q_{n,T} \sum\limits_{T\le t_j<2T} |\phi_j(z)||\overline {\phi_j(w)}|\\
&\ll_{\Gamma} T^{-\frac{3}{2}}Q_{n,T} \left(\sum\limits_{T\le t_j<2T} |\phi_j(z)|^2\right)^{\frac{1}{2}}\left(\sum\limits_{T\le t_j<2T} |\phi_j(w)|^2\right)^{\frac{1}{2}}\\
&\ll_{\Gamma} T^{-\frac{3}{2}}Q_{n,T} T^2 = T^{\frac{1}{2}}Q_{n,T}. 
\end{split}
\end{equation*}
The series $T^{\frac{1}{2}}Q_{n,T}$ is summable over dyadic $T= 2^k$. Therefore the series $G_n^{\pm}(z,w)$ converges absolutely and uniformly on $I_n\times \Gamma\backslash\mathbb{H}^2 \times \Gamma \backslash \mathbb{H}^2$ and its sum is continuous in $(z,w)$ and therefore Borel measurable. 
\begin{proposition}\label{nplus1}
For each fixed $w\in \Gamma \backslash \mathbb{H}^2$,
\begin{equation*}
\int_{\Gamma \backslash \mathbb{H}^2} |G_n^{\pm}(z,w)|^2\,d\mu(z) \ll_{\Gamma} (1+n)^2. 
\end{equation*}
\end{proposition}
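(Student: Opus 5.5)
We bound the supremum of the full sum by the sum of the suprema of the dyadic pieces, and then apply Minkowski's inequality in $L^2(\Gamma\backslash\mathbb{H}^2)$. Since the series defining $S_n^{\pm}$ converges absolutely and uniformly, for every $z$ we have
\begin{equation*}
G_n^{\pm}(z,w) \le \sum_{k=0}^{\infty} \sup_{R\in I_n} |B_n^{\pm}(2^k,R;z,w)| =: \sum_{k=0}^{\infty} g_k(z).
\end{equation*}
Each $g_k$ is measurable, since $B_n^{\pm}$ is continuous in $(R,z)$ and the supremum may be taken over rational $R$. Minkowski's inequality then gives
\begin{equation*}
\|G_n^{\pm}(\cdot,w)\|_2 \le \sum_{k=0}^\infty \|g_k\|_2 \ll_{\Gamma} \sum_{k=0}^{\infty} Q_{n,2^k},
\end{equation*}
where the last step is Proposition \ref{mainineq} applied with $T=2^k$. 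This bound is uniform in $w$, so the whole argument reduces to showing $\sum_k Q_{n,2^k}\ll 1+n$ and squaring.

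For that sum, recall $Q_{n,T}=(1+\delta_nT)^{-3/2}$ with $\delta_n=e^{-(n+1)/2}$. We split at $2^k\approx \delta_n^{-1}$.
\begin{itemize}
\item For $2^k\le \delta_n^{-1}$ we use $Q_{n,2^k}\le 1$. The number of such $k$ is at most $1+\log_2(\delta_n^{-1}) = 1+\frac{n+1}{2\log 2}\ll 1+n$.
\item For $2^k>\delta_n^{-1}$ we use $Q_{n,2^k}\le (\delta_n2^k)^{-3/2}$. These terms form a geometric series with ratio $2^{-3/2}$ whose first term is at most $1$, so their total is $O(1)$.
\end{itemize}
Hence $\sum_k Q_{n,2^k}\ll 1+n$, and squaring the Minkowski bound gives the claimed $\int|G_n^\pm|^2\,d\mu\ll_\Gamma (1+n)^2$.

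The only point needing care is the interchange of $\sup_R$ with the infinite sum, together with the measurability of the suprema. Both follow from the absolute uniform convergence of the dyadic series established just above the proposition. The analytic content is all contained in Proposition \ref{mainineq}. Note that the dyadic count of length $\asymp n$ is where the factor $R^{2}$ in the second moment comes from, and so it produces part of the power of $R$ in Theorem \ref{maximalestimate}.
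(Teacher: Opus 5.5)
Your proposal is correct and is essentially the paper's proof: Minkowski over the dyadic blocks, the maximal estimate of Proposition \ref{mainineq} for each block, and splitting $\sum_k Q_{n,2^k}$ at $2^k \approx \delta_n^{-1}$ to get $\ll 1+n$. The only cosmetic difference is the passage to the infinite sum. You bound $G_n^{\pm}\le\sum_k g_k$ pointwise and use countable Minkowski, which follows from finite Minkowski plus monotone convergence. The paper instead truncates at $K$, bounds $G_{n,K}^{\pm}$, and lets $K\to\infty$ via Fatou's lemma.
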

\begin{proof} Write the finite block sums 
\begin{equation*}
S_{n,K}^{\pm}(R;z,w) := \sum \limits_{k=0}^{K} B_n^{\pm}(2^k,R;z,w),
\end{equation*}
and define
\begin{equation*}G_{n,K}^{\pm}(z,w) = \sup\limits_{R\in I_n} |S_{n,K}^{\pm}(R;z,w)|.
\end{equation*}
Then, applying the Minkowski inequality and the maximal estimate of Proposition \ref{mainineq} we obtain 
\begin{equation*}
\begin{split}
\left(\int_{\Gamma \backslash \mathbb{H}^2} |G_{n,K}^{\pm}(z,w)|^2\,d\mu(z)\right)^{\frac{1}{2}} &= \left(\int_{\Gamma \backslash \mathbb{H}^2} \sup\limits_{R\in I_n} \left|\sum \limits_{k=0}^{K} B_n^{\pm}(2^k,R;z,w)\right|^2\,d\mu(z) \right)^{\frac{1}{2}}\\
&\ll_{\Gamma} \sum\limits_{k=0}^K \left(\int_{\Gamma \backslash \mathbb{H}^2}\sup\limits_{R\in I_n}\left|B_n^{\pm}(2^k,R;z,w)\right|^2\,d\mu(z)\right)^{\frac{1}{2}}\\
&\ll_{\Gamma} \sum\limits_{k=0}^K Q_{n,2^k} \\
&\ll_{\Gamma} \sum\limits_{k=0}^{\infty} Q_{n,2^k} \ll_{\Gamma} (1+n).
\end{split}
\end{equation*}
The last inequality follows if we split the series for $k\geq \lfloor n/2\log2\rfloor$ and $k\leq\lfloor n/2\log2\rfloor$ and treat the two cases separately. Finally, by Fatou's lemma
\begin{equation*}
\int_{\Gamma \backslash \mathbb{H}^2} |G_n^{\pm}(z,w)|^2\,d\mu(z) \le \liminf\limits_{K\to \infty} \int_{\Gamma \backslash \mathbb{H}^2}|G_{n,K}^{\pm}(z,w)|^2\,d\mu(z) \ll_{\Gamma} (1+n)^2. 
\end{equation*}
\end{proof}

\subsection{Applying the Borel-Cantelli Lemma} 

Fix $\eta>0$ and set $A_{n,\eta} = n^{\frac{3}{2}}(\log(n))^{\frac{1}{2}+\eta}$. Consider the set of points  
\begin{equation*}
E_{n,\eta}^{\pm}(w) = \{z \in \mathbb{H}^2 : G_n^{\pm}(z,w) > A_{n,\eta}\}. 
\end{equation*}
The Chebyshev inequality and Proposition \ref{nplus1} imply that 
\begin{equation*}
\begin{split}
\mu(E_{n,\eta}^{\pm}(w))  &\le A_{n,\eta}^{-2} \int_{\Gamma \backslash \mathbb{H}^2} |G_n^{\pm}(z)|^2\,d\mu(z) \\
&\ll_{\Gamma}  \frac{1}{n\log(n)^{1+2\eta}}. 
\end{split}
\end{equation*}
The summability of the series of the sequence $\{n^{-1}\log(n)^{-(1+2\eta)}\}_n$ implies that 
\begin{equation*}
\sum\limits_{n=1}^{\infty} \mu(E_{n,\eta}^+ \cup E_{n,\eta}^-) <\infty. 
\end{equation*}
Thus, the first Borel-Cantelli Lemma implies that 
\begin{equation*}
\mu(\limsup\limits_{n\to \infty} (E^+_{n,\eta}\cup E_{n,\eta}^-)) = 0. 
\end{equation*}
Define $B_{w,\eta}:= (\Gamma \backslash \mathbb{H}^2)\backslash \limsup\limits_{n\to \infty} (E^+_{n,\eta}\cup E_{n,\eta}^-)$. If $z\in Z_{w,\eta}$ then there exists an $n_0= n_0(z,w,\eta)$ such that 
\begin{equation*}
G_n^{\pm}(z,w) \le A_{n,\eta} \text{ for } n\ge n_0. 
\end{equation*}
For $R\in I_n$ this implies that 
\begin{equation*}
|S_n^{\pm}(R;z,w)|\le n^{\frac{3}{2}}(\log(n))^{\frac{1}{2}+\eta} \ll R^{\frac{3}{2}}(\log(R))^{\frac{1}{2}+\eta}
\end{equation*}
and as a result that 
\begin{equation*}
\left|\sum\limits_{t_j\geq 1} h^{\pm}_{R,\delta(R)}(t_j)\phi_j(z)\overline{\phi_j(w)}\right|= \left|e^{\frac{R} {2}}S_n^{\pm}(R;z,w)\right| \ll_{M,w,z,\eta} e^{\frac{R}{2}}R^{\frac{3}{2}}(\log(R))^{\frac{1}{2}+\eta}. 
\end{equation*}
This completes the proof of Theorem \ref{maximalestimate}. 

\subsection{Completing the proof of our main theorem} The proof of Theorem \ref{ourmainthm1} follows by taking, for $\eta=1/m$, the intersection
\begin{equation*}
B_w = \bigcap_{m=1}^{\infty} B_{w,1/m}.
\end{equation*}
Since $\mu(B_{w,1/m})=1$ for every $m\geq1$, it follows that $\mu(B_w)=1$. The proof of Theorem \ref{ourmainthm1} is complete.

\section{The second moment of the error term}

\subsection{An improved upper bound}

In this subsection we prove Theorem \ref{cramercloser}, which we now recall. 

\begin{theorem}\label{XloglogX}
Assume $\Gamma$ is cocompact. Given any $w\in \mathbb{H}^2$ there exists a subset $B_w \subset \Gamma \backslash \mathbb{H}^2$ of full hyperbolic measure such that for any $z\in B_w$ and every sufficiently large $X$:
\begin{equation*}
\frac{1}{X} \int_X^{2X} |E(x;z,w)|^2\,dx \ll_{\Gamma,z,w} X\log\log X. 
\end{equation*}
\end{theorem}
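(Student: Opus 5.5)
We would follow the same architecture as the proof of Theorem \ref{maximalestimate}. The difference is that we now average in the radius variable instead of taking a supremum, and we use a smaller smoothing parameter so that the $X\delta$ term in \eqref{crucialrelation} is harmless in mean square. In the variable $R=\arccosh(x/2)$ we have $dx\asymp e^{R}dR$. For $x\in[X,2X]$ the radius $R$ ranges over an interval of bounded length contained in $I_n\cup I_{n+1}$, with $n\asymp\log X$. By \eqref{crucialrelation} and \eqref{discreteexp}, for $R\in I_n$,
\begin{equation*}
|E(x;z,w)|^2\ll e^{R}\,|S_n^{\pm}(R;z,w)|^2+X^2\delta_n^2+X\log^2X .
\end{equation*}
With $\delta_n=e^{-(n+1)/2}$ the term $X^2\delta_n^2\ll X$ is acceptable, but the term $X\log^2X$ from the small eigenvalues and $h(0)$ is too large. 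So first we would sharpen these finitely many terms: the $h(0)$-term for $\lambda_j=1/4$ and the contributions of $t_j\le1$. In mean over $R$ they are $O(X^{1/2})$, because $h_R(t)$ for fixed small real $t$ oscillates like $e^{R/2}\cos(tR+\cdot)$ and is $O(e^{R/2})$ off $t=0$. A multiplicity-$1/4$ eigenvalue gives $h_R(0)\asymp Re^{R/2}$, which we simply absorb into the main term $M$, as the paper already does. The comparison $M-M^{\pm}=O(X\delta)$ then also only costs $O(X)$.

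The key step is an $L^2(d\mu(z))$ estimate for the radial average
\begin{equation*}
H_n^{\pm}(z,w):=\int_{I_n}|S_n^{\pm}(R;z,w)|^2\,dR .
\end{equation*}
By Tonelli and orthonormality, as in Proposition \ref{L2bounds}, we get
\begin{equation*}
\int_{\Gamma\backslash\mathbb{H}^2}H_n^{\pm}\,d\mu=\int_{I_n}\sum_{t_j\ge1}|c^{\pm}_{n,t_j}(R)|^2|\phi_j(w)|^2\,dR\ll\sum_{t_j\ge1}t_j^{-3}|\phi_j(w)|^2\ll_{\Gamma}1 ,
\end{equation*}
by Lemma \ref{cpmdcpm} and the local Weyl law summed dyadically ($\sum_k 2^{-3k}2^{2k}<\infty$). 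The gain over the proof of Theorem \ref{ourmainthm1} is that orthogonality across \emph{all} dyadic blocks is used at once, so no Minkowski loss $(1+n)$ and no Sobolev step appear. The bound is uniform in $n$ and independent of $Q_{n,T}$.

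Then Chebyshev and Borel--Cantelli give the result. Set $E_n=\{z: H_n^+(z,w)+H_n^-(z,w)>n\log n\}$. Then $\mu(E_n)\ll (n\log n)^{-1}$, which is not summable, so we must adjust. The cheapest fix is to use the threshold $n(\log n)^{1+\eta}$. That gives $X\log X(\log\log X)^{1+\eta}$, not the claimed bound, so we need a better choice. The point to notice is that $H_n$ is already an average in the radius variable. Applied at scale $n$ with threshold $A_n$, the conclusion for $z\in B_w$ is
\begin{equation*}
\frac1X\int_X^{2X}|E|^2dx\ll X\,(A_n+A_{n+1}).
\end{equation*}
So we need $\sum_n A_n^{-1}<\infty$ with $A_n\ll\log n$, which is impossible for thresholds on individual $n$ alone. \textbf{This is the main obstacle.} What we would try first is to exploit independence across scales more cleverly. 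Group the radii into blocks $n\in[2^m,2^{m+1})$ and apply Chebyshev to $\sup_{n\in\text{block}}H_n$, bounding that supremum via the derivative in $n$. Alternatively, use a fourth-moment estimate for $H_n$ in $z$, so that Chebyshev yields a summable tail $A_n^{-2}$ with $A_n=\log n$ (more precisely $\log\log X$). A fourth moment of $S_n$ requires a bound for $\int|\phi_j\phi_k\phi_l\phi_m|$-type sums. Failing that, the honest fallback is a threshold $A_n=(\log n)^{1+\eta}$ together with the block-supremum trick, which gives $X(\log\log X)^{1+\eta}$. Matching the stated single $\log\log X$ exactly is where we would spend most of the effort.

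Finally, intersect the full-measure sets over the signs (and over $\eta=1/m$ if needed), exactly as in the completion of Theorem \ref{ourmainthm1}.
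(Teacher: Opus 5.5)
Your proposal has a genuine gap, and you locate it yourself. Once the \emph{entire} spectral sum $S_n^{\pm}$ is controlled only on average in $z$, you get $\int H_n\,d\mu\ll 1$ uniformly in $n$. Chebyshev plus Borel--Cantelli then forces thresholds with $\sum_n A_n^{-1}<\infty$. The best this yields is $A_n=n(\log n)^{1+\eta}$, i.e.\ $X\log X(\log\log X)^{1+\eta}$, which is weaker even than the known $X\log X$. None of your suggested repairs closes this:
\begin{itemize}
\item The fallback threshold $A_n=(\log n)^{1+\eta}$ is not summable either.
\item The block-supremum trick does not help. Orthogonality alone gives only $\int\sup_{n\in[2^m,2^{m+1})}H_n\,d\mu\le\int\int_{2^m}^{2^{m+1}}|S|^2\,dR\,d\mu\ll 2^m$, which is no better than treating each $n$ separately.
\item A fourth-moment bound would need control of $\int|\phi_j\phi_k\phi_l\phi_m|\,d\mu$, which is not available for a general cocompact $\Gamma$.
\end{itemize}

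The missing idea is that the low-frequency part should not be treated probabilistically at all. Split $e(R;z,w)=e^{-R/2}E(2\cosh R;z,w)$ at a cutoff $T$ into a bulk $F_T$ (over $t_j\le T$) and a tail $G_T$.

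For the bulk, use \eqref{hrt} to reduce to $S(R)=\sum_{t_j\le T}a_je^{it_jR}$ with $a_j=t_j^{-3/2}\phi_j(z)\overline{\phi_j(w)}$. Then prove a Gallagher/large-sieve type inequality that holds for \emph{every} $z,w$. Take a smooth bump $\chi_A\ge\mathbf{1}_{[A,A+3]}$ whose Fourier transform decays like $(1+|\xi|)^{-2}$. This gives
\[
\int_A^{A+3}|S|^2\,dR\ll\sum_{n,m}\frac{B_nB_m}{(1+|n-m|)^{2}}\ll\sum_{n\le T+1}B_n^2\ll\log T,
\]
where $B_n\ll n^{-1/2}$ follows from Cauchy--Schwarz and the local Weyl law on unit windows.

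Only the tail is handled in $L^2(d\mu(z))$. Orthogonality gives $\int|G_T|^2\,d\mu\ll T^{-1}$. Choosing $T=L_k=(k+1)^2$ on $[k,k+3]$ makes $\sum_k\int\int_k^{k+3}|G_{L_k}|^2\,dR\,d\mu<\infty$. Hence for a.e.\ $z$ the tail contributes only a bounded constant, with no growing threshold.

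The total is $\log L_k+O_{z,w}(1)\ll\log k\asymp\log\log X$. So the $\log\log X$ comes from a deterministic $\log T$ with $T$ polynomial in $\log X$, not from a Borel--Cantelli threshold.
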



As in the work of Phillips and Rudnick \cite{PhillipsRudnick:1994}, let us consider the normalized error term given by
\begin{equation*}
e(R;z,w) = e^{-\frac{R}{2}} E(2\cosh R;z,w).
\end{equation*}
In order to deduce Theorem \ref{XloglogX}, it suffices to study the second moment 
\begin{equation*}
\int_{A}^{A+3} |e(R;z,w)|^2 dR
\end{equation*}
uniformly for $A \geq 1$. For $R,t\geq 1$ we will use the uniform bound \cite{Chamizo:1996a}
\begin{equation} \label{hrt}
h_R(t) = 2\sqrt{2\pi\sinh R} t^{-\frac{3}{2}}\cos \left(tR - \frac{3\pi}{4}\right) + O (e^{R/2}t^{-5/2}). 
\end{equation}
For the estimates below we can ignore the contribution of the error \eqref{hrt} completely since it contributes only bounded quantities. Define 
\begin{equation*}
F_T (R;z,w) := e^{-R/2} \, \sum\limits_{0<t_j\le T} h_R(t_j) \phi_j(z) \overline{\phi_j(w)},
\end{equation*}
and the tail
\begin{equation*}
G_T (R;z,w):= e(R;z,w) - F_T (R;z,w).
\end{equation*}
Clearly, for any $A\geq 1$ we get the upper bound
\begin{equation*}
\int _A^{A+3}|e(R;z,w)|^2\,dR \le 2\left(\int _A^{A+3}|F_T(R;z,w)|^2\,dR+\int _A^{A+3}|G_T(R;z,w)|^2\,dR\right).
\end{equation*}
We will bound separately the second moment of the bulk and the tail of the series. We first have the following estimate. 
\begin{lemma}\label{L1}
Uniformly for $R,A\geq 1$,
\begin{equation*}
\int _A^{A+3}|F_T(R;z,w)|^2\,dR \ll_{\Gamma} \log T.
\end{equation*}
\end{lemma}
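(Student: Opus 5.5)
Replace $h_R(t_j)$ by the main term of \eqref{hrt}. The remainder $O(e^{R/2}t_j^{-5/2})$ contributes to $F_T$ at most
\[
\sum_{t_j\le T} t_j^{-5/2}|\phi_j(z)||\phi_j(w)| \ll 1 .
\]
This follows by dyadic decomposition, Cauchy--Schwarz and the local Weyl law: each block $[U,2U)$ gives $\ll U^{-5/2}U^2=U^{-1/2}$, which is summable. The tiny eigenvalues $t_j<1$ form a finite set and contribute $O_{z,w}(R)$ at worst. More carefully, the bound $|h_R(t)|\ll (1+R)e^{R/2}$ from Lemma \ref{sphericalproperties} only gives $R$, not $O(1)$, so I would exclude the finitely many $t_j<1$ or absorb them into the constant $\ll_{\Gamma,z,w}$. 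Since $\sqrt{2\sinh R}\,e^{-R/2}=1+O(e^{-2R})$, we are reduced to bounding $\int_A^{A+3}|P(R)|^2\,dR$ for the trigonometric polynomial
\[
P(R)=\sum_{1\le t_j\le T} a_j\cos\!\left(t_jR-\tfrac{3\pi}{4}\right),\qquad a_j=2\sqrt{\pi}\,t_j^{-3/2}\phi_j(z)\overline{\phi_j(w)} .
\]

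The main tool is a Montgomery--Vaughan / Gallagher-type mean value inequality on an interval of length $3$. First write the cosine as a combination of $e^{\pm i t_j R}$. Then insert a nonnegative smooth weight $\psi\ge \mathbf 1_{[A,A+3]}$ supported in $[A-1,A+4]$, and expand the square:
\[
\int|P|^2\psi \le \sum_{j,l}|a_j||a_l|\,|\hat\psi(t_j\mp t_l)| .
\]
The cross terms with frequencies $t_j+t_l$ are negligible because $\hat\psi$ decays rapidly. For the difference frequencies, $|\hat\psi(\xi)|\ll (1+|\xi|)^{-2}$, and $2|a_j||a_l|\le |a_j|^2+|a_l|^2$, so
\[
\int_A^{A+3}|P|^2 \ll \sum_j |a_j|^2 \sum_l (1+|t_j-t_l|)^{-2}.
\]
The number of $t_l$ in a unit window around $t$ is $\ll t$ by Weyl's law, so the inner sum is $\ll t_j$. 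Hence the integral is $\ll \sum_{t_j\le T} t_j^{-2}|\phi_j(z)|^2|\phi_j(w)|^2$.

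This is where I expect the main obstacle. The pointwise size $|\phi_j(z)\phi_j(w)|^2$ is not controlled individually, only on average. To handle it, I would use the bound $\sum_l (1+|t_j-t_l|)^{-2}|\phi_l(z)\phi_l(w)|$ directly instead of the Weyl count. Split $|a_j||a_l|\le \tfrac12(|a_j|^2\kappa+|a_l|^2\kappa^{-1})$ with weights adapted to $|\phi|$, or apply Cauchy--Schwarz in the form
\[
\sum_{j,l}\frac{|b_j||b_l|}{(1+|t_j-t_l|)^2}\le \sum_j |b_j|^2 w_j,\qquad b_j=t_j^{-3/2}\phi_j(z)\overline{\phi_j(w)},
\]
where we still need $\sum_l (1+|t_j-t_l|)^{-2}\,t_l^{-3/2}|\phi_l(z)\phi_l(w)| \ll t_j^{-1/2}$. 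That requirement follows from the local Weyl law on unit windows, $\sum_{|t_l-t|\le 1}|\phi_l(z)|^2\ll t$ (the window version, Iwaniec Thm 7.2 / Hörmander), together with Cauchy--Schwarz over each window.

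Combining these with Schur's test on the matrix $(1+|t_j-t_l|)^{-2}$ and weights $|\phi_j(z)\phi_j(w)|$ gives
\[
\int_A^{A+3}|P|^2 \ll \sum_{t_j\le T} t_j^{-2}\bigl(|\phi_j(z)|^2+|\phi_j(w)|^2\bigr)\cdot O(1),
\]
which is a symmetrized weighted sum. By dyadic decomposition and the local Weyl law, each block $[U,2U)$ contributes $\ll U^{-2}U^2=1$. Summing over the $\ll \log T$ dyadic blocks yields $\ll_\Gamma \log T$, uniformly in $A$, as claimed.
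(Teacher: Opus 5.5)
Your argument is correct and is essentially the paper's: a smooth majorant of $\mathbf{1}_{[A,A+3]}$ with $(1+|\xi|)^{-2}$ Fourier decay, followed by the unit-window local Weyl law with Cauchy--Schwarz inside each window. Your row-sum bound $\sum_l (1+|t_j-t_l|)^{-2}|b_l|\ll t_j^{-1/2}$, followed by $\sum_{t_j\le T}|b_j|\,t_j^{-1/2}\ll\log T$, is just the unbinned form of the paper's $B_n\ll n^{-1/2}$ and $\sum_{n,m}B_nB_m(1+|n-m|)^{-2}\ll\sum_n B_n^2\ll\log T$. In a write-up, drop the termwise AM--GM false start and the Schur-test wording, since the row-sum bound finishes the proof directly; also note that the finitely many $0<t_j<1$ contribute $O(1)$ rather than $O(R)$, because $h_R(t)\ll_t e^{R/2}$ for each fixed real $t\neq 0$.
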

\begin{proof}
Up to the easily controlled error term $O(e^{R/2}t^{-5/2})$ the quantity we need to control is given by 
\begin{equation*}
\sum\limits_{0<t_j\le T} a_j \cos(Rt_j-3\pi/4)  = \Re(e^{-3i\pi/4}S(R)),
\end{equation*}
where 
\begin{equation*}
a_j = t_j^{-3/2}\phi_j(z) \overline{\phi_j(w)} 
\end{equation*}
and
\begin{equation*}
S(R) = \sum\limits_{0<t_j \le T}a_je^{it_jR}. 
\end{equation*}
Since its norm is bounded above by $|S(R)|$, it thus suffices to show that 
\begin{equation*}
\int_A^{A+3} |S(R)|^2\,dR \ll_{\Gamma} \log T. 
\end{equation*}
Let us fix a nonnegative smooth compactly supported function $\chi$ such that $\chi =1$ on $[0,3]$ and $\supp \chi \subset [-1,4]$. Let $\chi_A(R) = \chi(R-A)$. Clearly, 
\begin{equation*}
\int_A^{A+3} |S(R)|^2\,dR \le \int_{\R} \chi_A(R)|S(R)|^2\,dR. 
\end{equation*}
Define 
\begin{equation*}
K_A(\xi) = \int _{\R} \chi_A(R) \, e^{i\xi R} \,dR. 
\end{equation*}
Clearly $|K_A(\xi)|\le \|\chi\|_{L^1}$. Performing two integrations by parts with no boundary terms, since $\chi$ is smooth and compactly supported, gives
\begin{equation*}
|K_A(\xi)| \le \frac{\|\chi''\|_{L^1}}{|\xi|^2},~\xi\neq 0.
\end{equation*}
Combining the two estimates we get 
\begin{equation*}
|K_A(\xi)| \le \frac{C_{\chi}}{(1+|\xi|)^2}. 
\end{equation*}
Using this estimate, we have 
\begin{equation}\label{B1}
\begin{split}
\int_{\R} \chi_A(R)|S(R)|^2\,dR &= \sum\limits_{i,j} a_i\overline{a_j} K_A(t_i-t_j) \\
&\le C_{\chi}\sum\limits_{i,j} \frac{|a_i||a_j|}{(1+|t_i-t_j|)^2}. 
\end{split}
\end{equation}
Now, let us define 
\begin{equation}\label{B2}
B_n = \sum\limits_{\substack{n\le t_j <n+1\\ t_j\le T}} \frac{|\phi_j(z) \overline{\phi_j(w)}|}{t_j^{3/2}} \ll_{\Gamma} n^{-1/2},
\end{equation}
where the inequality is a direct consequence of the local Weyl law on the unit interval $[n,n+1]$. A direct consequence of \eqref{B1} is the bound 
\begin{equation*}
\begin{split}
\int_A^{A+3} |S(R)|^2\,dR &\le C_{\chi}\sum\limits_{i,j} \frac{|a_i||a_j|}{(1+|t_i-t_j|)^2} \\
&\ll_{\Gamma} \sum\limits_{n,m\le T+1} \frac{B_nB_m}{(1+|n-m|)^2} \\
&\ll_{\Gamma} \sum\limits_{n\le T+1} B_n^2 \\
&\ll_{\Gamma} \log T,
\end{split}
\end{equation*}
where the last inequality follows from \eqref{B2}. 
\end{proof}
We now bound the second moment of the tail. 
\begin{lemma}\label{L2}
Uniformly for $R \geq 1$,
\begin{equation*}
\int_{\Gamma \backslash \mathbb{H}^2}|G_T (R;z,w)|^2\,d\mu(z) \ll_{\Gamma} T^{-1}. 
\end{equation*}
\end{lemma}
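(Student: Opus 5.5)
Write the tail as a spectral sum over $t_j>T$, after first identifying what $G_T$ really is. Formally,
\[
e(R;z,w)=e^{-R/2}\sum_{t_j>0}h_R(t_j)\phi_j(z)\overline{\phi_j(w)} .
\]
This identity is understood in $L^2$ in $z$ for fixed $R$ and $w$, since the sharp-cutoff kernel $k_R(u(\cdot,w))$ is in $L^2$ and its automorphization has an $L^2$ spectral expansion. Modulo the finitely many small eigenvalues, which are absorbed into the main term or bounded as in Section 2, this gives
\[
G_T(R;z,w)=e^{-R/2}\sum_{t_j>T}h_R(t_j)\phi_j(z)\overline{\phi_j(w)} ,
\]
as an $L^2(\Gamma\backslash\mathbb{H}^2)$ function of $z$. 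The first step is to justify this carefully. Because $z\mapsto N(2\cosh R;z,w)$ is bounded on the compact quotient, it lies in $L^2$. Its Fourier coefficient against $\phi_j$ is $h_R(t_j)\overline{\phi_j(w)}$, by the standard fact that a point-pair invariant acts on eigenfunctions through its Selberg/Harish-Chandra transform. The main term is exactly the projection onto the eigenfunctions with $\lambda_j\le 1/4$, so $E$ is the projection onto the complementary eigenfunctions.

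Next apply Parseval, i.e. the same orthogonality computation as in Proposition \ref{L2bounds}:
\[
\int_{\Gamma\backslash\mathbb{H}^2}|G_T(R;z,w)|^2\,d\mu(z)=e^{-R}\sum_{t_j>T}|h_R(t_j)|^2|\phi_j(w)|^2 .
\]
Lemma \ref{sphericalproperties}(a) gives $e^{-R}|h_R(t_j)|^2\ll t_j^{-3}$, uniformly for $R\ge1$ and $t_j\ge1$. Split the sum dyadically over $[2^kT,2^{k+1}T)$. The local Weyl law gives $\sum_{U\le t_j<2U}|\phi_j(w)|^2\ll_\Gamma U^2$, uniformly in $w$ since $\Gamma$ is cocompact. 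Hence each dyadic block contributes $\ll U^{-3}U^2=U^{-1}$ with $U=2^kT$. Summing the geometric series over $k\ge0$ yields $\ll_\Gamma T^{-1}$, uniformly in $R\ge1$.

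The main obstacle is the first step, the $L^2$ justification of the spectral expansion of the non-smooth counting function. The pre-trace formula is not pointwise convergent there, which is why the paper smooths with $k^\pm$ in Section 2. I would handle this by working purely in $L^2(d\mu(z))$, where only square-summability of the coefficients is needed, and the estimate above shows that it holds. If one prefers to avoid this, the alternative is to define $G_T$ through the smoothed kernels $h^\pm_{R,\delta}$ and run the identical computation. Since $|m_\delta|\le 1$ by Lemma \ref{sphericalproperties}(b), the same bound $\ll T^{-1}$ follows uniformly in $\delta$.
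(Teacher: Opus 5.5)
Your proof is correct and follows the paper's route. Orthogonality reduces the integral to $e^{-R}\sum_{t_j>T}h_R(t_j)^2|\phi_j(w)|^2$, and Lemma \ref{sphericalproperties}(a) with the local Weyl law on dyadic blocks gives $\ll T^{-1}$; your unfolding argument justifying the $L^2$ expansion of the unsmoothed count is a step the paper leaves implicit.

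One caveat, which the paper shares: $M$ is not exactly the projection onto $\lambda_j\le 1/4$. For $\lambda_0$ one has $h_R(i/2)=\pi X-2\pi$, whereas $M$ uses $\pi X$. So $G_T$ equals the pure tail only up to a small-eigenvalue remainder of size $O(e^{(1/2-s_j)R})$, which does not decay in $T$ but is harmless in the proof of Theorem \ref{XloglogX}.
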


\begin{proof}
Orthogonality of the eigenfunctions and the local Weyl law over dyadic intervals immediately give 
\begin{equation*}
\begin{split}
\int_{\Gamma \backslash \mathbb{H}^2}|G_T (R;z,w)|^2\,d\mu(z)  &= e^{-R}\sum\limits_{T<t_j} h_R(t_j)^2|u_j(w)|^2\\
&\ll_{\Gamma} \sum\limits_{T<t_j} t_j^{-3}|u_j(w)|^2 \ll_{\Gamma} T^{-1}. 
\end{split}
\end{equation*}
\end{proof}

\begin{proof}[Proof of Theorem \ref{XloglogX}]
For $k\geq 1$ choose $L_k = (k+1)^2$ and define 
\begin{equation*}
Q_k(z,w) = \int_k^{k+3} |G_{L_k}(R;z,w)|^2\,dR.
\end{equation*}
It follows from Lemma \ref{L2} that 
\begin{equation*}
\sum\limits_k \int_{\Gamma \backslash \mathbb{H}^2} Q_k (z,w)  \,d\mu(z) <\infty.
\end{equation*}
Applying Tonelli's theorem to change the order of integration we get 
\begin{equation*}
\int_{\Gamma \backslash \mathbb{H}^2} \sum\limits_k Q_k(z,w) \,d\mu(z) <\infty,
\end{equation*}
which implies that there exists a full measure subset $B_w\subset \Gamma \backslash \mathbb{H}^2$ of points $z$ for which 
\begin{equation*}
Q(z,w) = \sum\limits_k Q_k(z,w) <\infty. 
\end{equation*}
This, together with Lemma \ref{L1} imply that simultaneously for all $k$ and for every $z \in B_w\subset \Gamma \backslash \mathbb{H}^2$ 
\begin{equation*}
\int_k^{k+3} |e(R;z,w)|^2\,dR \ll \log(k) + Q(z,w) \ll_{\Gamma,z,w} \log(k). 
\end{equation*}
For large $X$ the interval $[X,2X]$ is contained in the radius (in the $R$-variable) interval $[k,k+3]$, which gives 
\begin{equation*}
\frac{1}{X} \int_X^{2X} |E(x;z,w)|^2\,dx \ll_{\Gamma,z,w} X\int_k^{k+3} |e(R;z,w)|^2\,dR \ll_{\Gamma,z,w} X\log\log(X).
\end{equation*}
\end{proof}

\subsection{Failure of the conjectured lower bound} 

We proceed to the proof of Theorem \ref{cramerfailure}, which we recall. 

\begin{theorem}
For every $z \in \Gamma \backslash \mathbb{H}^2$ it holds that 
\begin{equation*}
\limsup\limits_{X\to \infty} \frac{1}{X^2}\int_X^{2X} |E(x;z,z)|^2\,dx  = \infty. 
\end{equation*}
\end{theorem}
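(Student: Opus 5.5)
The plan is to work in the radial variable, as in the proof of Theorem \ref{XloglogX}. With $x = 2\cosh R$ one has $dx \asymp e^R dR$, so
\begin{equation*}
\frac{1}{X^2}\int_X^{2X}|E(x;z,z)|^2dx \asymp \int_{A}^{A+c}|e(R;z,z)|^2\,dR
\end{equation*}
with $X \asymp e^A$. It therefore suffices to show that $\int \chi_A(R)|e(R;z,z)|^2dR$ is unbounded along a sequence $A\to\infty$. Here $\chi_A(R)=\chi(R-A)$, and $\chi \geq 0$ is a fixed bump whose Fourier transform $\widehat\chi$ is nonnegative and positive on $[-1,1]$, for instance $\chi=\psi*\widetilde\psi$. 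If $\chi$ is not compactly supported but rapidly decaying, the window is replaced by a rapidly decaying sum of unit windows; this only costs a constant in the $\limsup$.

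By \eqref{hrt} and the spectral expansion, up to $O(1)$ errors in $L^2_{\rm loc}(dR)$, the normalized error term is
\begin{equation*}
e(R;z,z)\approx\Re\Big(e^{-3\pi i/4}\sum_{t_j>0}a_je^{it_jR}\Big),\qquad a_j=2\sqrt{\pi}\,t_j^{-3/2}|\phi_j(z)|^2 .
\end{equation*}
On the diagonal the crucial feature is $a_j\geq0$. Expanding the square as in the proof of Lemma \ref{L1} gives a Gram form $\sum_{i,j}a_ia_j\widehat\chi(t_i-t_j)\cos((t_i-t_j)A)$, plus terms with frequencies $t_i+t_j$, which are harmless since $\widehat\chi(t_i+t_j)$ is tiny for $t_i+t_j\geq 2$. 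I will justify convergence by first using the smoothed kernels $h^{\pm}$ and then letting the smoothing go. Alternatively one can truncate at $t_j\le T'$ and control the tail in mean as in Lemma \ref{L2}.

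The key step is not to pick a single $A$ via Dirichlet's theorem. The off-diagonal high-frequency pairs would then have uncontrolled signs and total size $\asymp A$, which swamps everything. Instead I will average over $A$ against a \emph{positive-definite} weight. Fix $T$ and $K$, and let
\begin{equation*}
w(A)=\prod_{0<t_j\le T}F_K(t_jA),
\end{equation*}
where $F_K$ is the Fej\'er kernel, with nonnegative Fourier coefficients and mean one. Then $w\geq0$ is almost periodic, its Bohr–Fourier coefficients $\widehat w(\Delta)$ are all nonnegative, and $\widehat w(t_i-t_j)\geq(1-1/K)^2$ whenever $t_i,t_j\le T$. Averaging the Gram form against $w(A)\,dA$ over $[Y,2Y]$ and letting $Y\to\infty$ produces
\begin{equation*}
\sum_{i,j}a_ia_j\,\widehat\chi(t_i-t_j)\,\widehat w(t_i-t_j)\ \geq\ c\sum_{\substack{t_i,t_j\le T\\ |t_i-t_j|\le1}}a_ia_j\ \gg\ \sum_{n\le T}\Big(\sum_{n\le t_j<n+1}t_j^{-3/2}|\phi_j(z)|^2\Big)^2 .
\end{equation*}
Every term in the full sum is nonnegative, so the high-frequency part can simply be dropped. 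Coincidences among the $t_j$ are also harmless, since they only add nonnegative terms. For the last expression I need a lower bound in unit, or bounded-length, windows: $\sum_{n\le t_j<n+C}|\phi_j(z)|^2\gg n$. This follows from the pointwise local Weyl law with H\"ormander's remainder $O(T)$, after choosing $C$ large and regrouping. The result is $\gg\sum_{n\le T}n^{-1}\gg\log T$. Since $w\geq 0$ has mean $\to 1$, there are arbitrarily large $A$ with $\int\chi_A|e|^2\gg\log T$. Letting $T\to\infty$ gives the $\limsup=\infty$.

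I expect the main obstacle to be the rigorous justification of the averaging step. One must show that the mean over $A\in[Y,2Y]$ of $w(A)\int\chi_A|e(R;z,z)|^2dR$ exists, or at least has the stated $\liminf$ lower bound, when $e$ is an infinite, only conditionally convergent spectral series. My first attempt will be to replace $e$ by its truncation $F_{T'}$ with $T'\gg T$. For the truncation the identity is a finite computation, with nonnegative coefficients, so it is monotone in $T'$. I will then control $\int\chi_A|G_{T'}|^2$ on average over $A$ by Lemma \ref{L2}-type bounds combined with the $a_j\geq0$ structure. A cruder route would be a limsup over a subsequence together with Fatou's lemma. A secondary issue is passing from $E$ to $e$ without the smoothing error $O(X\delta)$. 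Here I would use \eqref{hrt} directly, as in Lemma \ref{L1}, rather than the sandwich \eqref{crucialrelation}, whose normalized error $O(R)$ would be too large.
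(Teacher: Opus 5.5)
Your route differs from the paper's, and its core mechanism is sound. On the diagonal $a_j\ge 0$, so after averaging $\int\chi_A|e|^2$ against a weight with nonnegative Bohr coefficients every pair $(i,j)$ contributes nonnegatively. The near-diagonal pairs with $t_i,t_j\le T$ then give $\gg\log T$ by the local Weyl law. This is a direct, quantitative form of the $|s|^{-1/2}$ singularity at $0^-$ that the paper exploits. The paper instead argues by contradiction: a uniform bound on local $L^2$ norms, together with Dirichlet recurrence $e^{it_jR_n}\to 1$ and weak compactness, would put $F$ in $L^2(-3,3)$, and testing against $\eps^{-1/2}\psi(s/\eps)$ shows it is not. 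There the assumed bound itself tames the high frequencies, so no positivity or weight is needed.

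As written, however, your final deduction fails. Contrary to your remark, coincidences are not harmless, because they inflate the mean of your weight. The Bohr mean of $w(A)=\prod_{t_j\le T}F_K(t_jA)$ is $\widehat w(0)=\sum\prod_k(1-|m_k|/(K+1))$, summed over all integer vectors with $|m_k|\le K$ and $\sum_k m_kt_k=0$. This equals $1$ only if the $t_j\le T$ are $\mathbb{Q}$-linearly independent, in particular simple. That is not known in general, and it is false whenever there are multiplicities (e.g.\ surfaces with non-abelian isometry group). Each coincidence $t_i=t_j$ multiplies $\widehat w(0)$ by $\sum_{|m|\le K}(1-|m|/(K+1))^2>1$. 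So $\widehat w(0)$ can grow exponentially in $T$, and you only obtain $\sup_{A\ge Y}\int\chi_A|e|^2\gg \log T/\widehat w(0)$.

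The repair is as follows. Let $\beta_1,\dots,\beta_r$ be a $\mathbb{Z}$-basis of the free group $\sum_{t_j\le T}\mathbb{Z}t_j$, and set $w(A)=\prod_{l\le r}F_K(\beta_lA)$. The $\beta_l$ are $\mathbb{Q}$-independent, so Kronecker--Weyl gives mean exactly $1$, and all Bohr coefficients stay $\ge 0$. Each $t_i-t_j$ has integer coordinates bounded by some $M_T$, so taking $K+1\ge 2rM_T$ gives $\widehat w(t_i-t_j)\ge (1-\tfrac{1}{2r})^r\ge\tfrac12$.

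The second gap is the passage from truncations to $e$ itself. Lemma \ref{L2} averages over $z$ with the other point fixed, so it says nothing at a fixed diagonal point. There the tail $\mathcal{G}_T=\sum_{t_j>T}a_j\cos(t_jR-3\pi/4)$ is not small: the same positivity argument applied to frequencies in $(T,T']$ shows its local $L^2$ mass is unbounded in $A$. The $h^{\pm}$ route also fails, since $m_\delta$ changes sign (destroying positivity) and the comparison of $e$ with its smoothings is not uniform as $A\to\infty$.

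No tail bound is needed, though. Write $e=\Phi_T+\mathcal{G}_T+B''$, where $\Phi_T$ is the trigonometric polynomial over $t_j\le T$ and $\int\chi_A|B''|^2\ll 1$. Discarding $\int\chi_A\mathcal{G}_T^2\ge 0$ gives $\int\chi_A|e|^2\ge \tfrac12\int\chi_A\Phi_T^2+\int\chi_A\Phi_T\mathcal{G}_T-O(1)$. The cross term is the pairing of the tempered distribution $\mathcal{G}_T$ with the Schwartz functions $\chi_A(R)\cos(t_iR-3\pi/4)$, $t_i\le T$. It is therefore an absolutely convergent exponential sum in $A$. Its $w$-mean is $\tfrac12\sum_{t_i\le T<t_j}a_ia_j\widehat\chi(t_i-t_j)\widehat w(t_i-t_j)\ge 0$, up to an $O(1)$ contribution from the frequencies $t_i+t_j$. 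With these two corrections your argument closes.
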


\begin{proof}
Let $a_j = |\phi_j(z)|^2$ and define the tempered distribution on $\R$ 
\begin{equation*}
F(s) = \sum\limits_{1\le t_j} a_j t_j^{-3/2}\cos(t_js - 3\pi/4). 
\end{equation*}
It holds that 
\begin{equation*}
e(R;z,z) = 2\sqrt{\pi(1-e^{-2R})}F(R) +B(R), \ \ |B(R)|\ll_{\Gamma,z}1. 
\end{equation*}
By way of contradiction, assume that the limit supremum in the theorem is finite. This implies a uniform bound on the $L^2(R)$-norm of $e(R;z,z)$ on intervals of finite length and thus uniform bounds
\begin{equation}\label{lL2b}
\int_{-3}^3|F_R(s)|^2\,ds \le C,
\end{equation}
where $F_R(s) = F(R+s)$. 

Simultaneous approximation of finitely many frequencies $t_j$ followed by a diagonal argument gives a sequence $\{R_n\}_n$ with $R_n\to \infty$ and $e^{it_jR_n}\to 1$. Thus distributionally $F_{R_n}(s)\to F(s)$. The uniform local $L^2$-bounds \eqref{lL2b} and weak compactness implies that $F\in L^2(-3,3)$. 

We show this is impossible. Choose a nonzero, nonnegative $\psi\in C^{\infty}_c((-2,-1))$ and write $\psi_\eps(s) = \eps^{-1/2}\psi(s/\eps)$. Let 
\begin{equation*}
K(u) = \int_{\mathbb{R}} \psi(u)\cos(uv-3\pi/4)\,dv, 
\end{equation*}
and $A(T) = \sum\limits_{1\le t_j\le T} a_j = c_z T^2 +O_z (T)$. Stieltjes partial summation gives 
\begin{equation*}
\begin{split}
\langle F,\psi_{\eps}\rangle &= 4 \pi c_z \sqrt{\eps} \int_{1^-}^{\infty} t^{-3/2}K(\eps t)\,dA(t) \\
&= 2 c_z \int_{\eps}^{\infty} u^{-1/2}K(u)\,du + O_z (\sqrt{\eps}).
\end{split}
\end{equation*}
Indeed, the $O_z (T)$ remainder term in $A(T)$ contributes at most a constant times 
\[
\sqrt{\eps}\left(1+\int_1^{\infty}t^{-3/2}|K(\eps t)|\,dt+\eps\int _1^{\infty}t^{-1/2}|K'(\eps t)|\,dt\right) = O(\sqrt{\eps}).
\]
Using Abel damping to interchange integrals, the identity 
\[
\int_0^{\infty} u^{-1/2}\cos(uv -3\pi/3)\,du = -\sqrt{\pi}(-v)^{-1/2},
\]
for $v<0$ shows that 
\[
\lim\limits_{\eps\to 0} \langle F,\psi_{\eps}\rangle = - 2 c_z \sqrt{\pi} \int_{-2}^{-1}\frac{\psi(v)}{\sqrt{-v}}\,dv \neq 0. 
\]
On the other hand, if $F\in L^2(-3,3)$ the by Cauchy-Schwarz 
\[
|\langle F,\psi_{\eps}\rangle|\le \|F\|_{L^2(-2\eps,-\eps)}\|\psi\|_2 \to 0
\]
which is a contradiction. 
\end{proof}

\section{Higher dimensional hyperbolic manifolds}

In this last section we prove Theorem \ref{higherdim}, recording only the changes needed in Sections $2$ and $3$. Let  $n \geq 3$ and let $\mathbb{H}^n$ denote the $n$-dimensional hyperbolic space. Let also $\Gamma \subset \hbox{SO}^+(1,n)$ be a discrete cocompact group of isometries acting on $\mathbb{H}^n$. The quotient $\GmodH^n$ is a $n$-dimensional geometrically finite orbifold of constant curvature $K=-1$. 

\subsection{Spectral background}

The eigenvalues of the (positive) Laplace--Beltrami operator are ordered by $0 =\lambda_0 \leq \lambda_1 \leq ... \leq \lambda_j \to \infty$, and we write 
\begin{equation*}
\lambda_j = s_j (n-1-s_j) =  \left(\frac{n-1}{2}\right)^2 + t_j^2,
\end{equation*}
with $s_j = (n-1)/2 + it_j$. We retain the notation $X = 2 \cosh R$.

\subsection{The spherical transform}

Assume we take a characteristic kernel $k$ of the unit ball defined similarly with \eqref{kernel}. The spherical transform of $k$ is given by
\begin{equation*}
h_R(t) =  \frac{(2\pi)^{\frac{n-1}{2}}}{\Gamma \left( \frac{n+1}{2}\right)} \int_{-R}^{R} \left(\cosh R - \cosh r\right)^{\frac{n-1}{2}} e^{itr} \, dr.
\end{equation*}
We consider again the automorphic kernel 
\begin{equation*}
K_X(z,w) =  K(z,w)=\sum_{\g\in \G}k(u(z,\gamma w)),
\end{equation*}
which satisfies $K(z,w) = N(X;z, w) $ (here by $N(X;z,w)$ we mean the new $n$-dimensional counting function). For the same reasons as in $2$-dimensions the pre-trace formula cannot be applied directly to $K(z,w)$ as the spherical transform of $k$ decays slowly (even slower in terms of the dimension $n$). We thus use again smooth approximations to the automorphic kernel $K(z,w)$.

We need the asymptotic behavior of the $n$-dimensional spherical transform $h_R(t)$ of the characteristic function $k$, which is given in terms of the associated Legendre function by
\begin{equation} \label{eq:hRdefhigher}
  h_R(t)= 2\pi (\sinh R)^{n/2} P_{-\frac{1}{2}+it}^{-n/2}(\cosh R),
\end{equation}
which is the higher dimensional analogue of \eqref{eq:hRdef} (see \cite{katsivelos, PhillipsRudnick:1994} and \cite[eq.~]{GradshteynRyzhik:2007}). Analyzing the properties of $h_R(t)$ using \eqref{eq:hRdefhigher} we obtain the following Lemma. 

 \begin{lemma} \label{sphericalpropertieshigher}
The spherical transform $h_R(t)$ is entire in $t \in \mathbb{C}$ and satisfies:
\\\\
(a) for real $|t| \geq 1, R \geq 1$ we have
\begin{equation*} 
  h_R(t)=O\left(\frac{e^{(n-1)R/2}}{(1+{\abs{t})}^{(n+1)/2}}\right), \ \ \ \ 
  \partial_R \, h_R(t) = \left( \frac{e^{(n-1)R/2}}{(1+{\abs{t})}^{(n-1)/2}}\right),
\end{equation*}
(b) for real $|t|\leq 1$ we also have
\begin{equation*}
h_R(t) =  O \left((1+R) \, e^{(n-1)R/2)}\right),    
\end{equation*}
(c) for real $t \neq 0$ and $0< \delta < 1/4$ we have 
\begin{equation*} 
m_{\delta}(t):= \frac{h_{\delta}(t)}{\hbox{vol}(B_{\delta})} = O\left(\frac{1}{(1+{\abs{\delta t})}^{(n+1)/2}}\right),
\end{equation*}
where by $B_{\delta}$ we denote the ball of radius $\delta$,
\\
(d) for $0 \leq s_j \leq (n-1)/2$ we have
\begin{equation*}
 \mu_{\delta}     \left(s_j\right) = 1 +O_n(1+\delta^2).
\end{equation*}
\end{lemma}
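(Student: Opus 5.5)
We need to prove Lemma \ref{sphericalpropertieshigher}, and the plan is to work from the Abel-type integral
\begin{equation*}
h_R(t) = c_n \int_{-R}^{R} (\cosh R-\cosh r)^{\frac{n-1}{2}} e^{itr}\,dr, \qquad c_n=\frac{(2\pi)^{\frac{n-1}{2}}}{\Gamma\left(\frac{n+1}{2}\right)},
\end{equation*}
exactly as in the proof of Lemma \ref{sphericalproperties}. We use the Legendre representation \eqref{eq:hRdefhigher} only as a cross-check: its standard large-$t$ asymptotics, uniform for $R\geq 1$, give $h_R(t)\approx (\sinh R)^{\frac{n-1}{2}}t^{-\frac{n+1}{2}}\cos(tR-\phi_n)$. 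Entireness in $t$ is immediate, since the integrand is compactly supported in $r$ and bounded.

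For part (b), bound $|e^{itr}|\le 1$ for real $t$ and use $(\cosh R-\cosh r)^{\frac{n-1}{2}}\le (\cosh R)^{\frac{n-1}{2}}$; integrating over an interval of length $2R$ gives $O(R\,e^{(n-1)R/2})$.

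For part (a), let $t\ge 1$, set $m=\frac{n-1}{2}$ and $f(r)=(\cosh R-\cosh r)^m$. The singularity sits only at the endpoints $r=\pm R$, where $f$ vanishes like $\left((R-|r|)\sinh R\right)^m$.
\begin{itemize}
\item Split off the boundary layer $|r|\in[R-1/t,R]$. There $f\ll (\sinh R/t)^m$, so this piece contributes $O(e^{mR}t^{-m-1})=O(e^{(n-1)R/2}t^{-\frac{n+1}{2}})$.
\item On the complementary range, integrate by parts $\lceil m\rceil+1$ times, or for half-integer $m$ use the monotonicity of the relevant derivative, as was done in dimension $2$. Each integration gains $t^{-1}$ and each derivative of $f$ costs a factor $(R-|r|)^{-1}$, which is at most $t$ at the cutoff. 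The boundary terms at $|r|=R-1/t$ are all of size $e^{mR}t^{-m-1}$.
\end{itemize}
The derivative bound follows the same pattern. We have
\begin{equation*}
\partial_R h_R(t)= c_n\, m \sinh R \int_{-R}^R (\cosh R-\cosh r)^{m-1}e^{itr}\,dr,
\end{equation*}
with no boundary term since $f(\pm R)=0 $ when $m>0$. The same splitting with exponent $m-1$ gives $\sinh R\cdot e^{(m-1)R}t^{-m}=e^{mR}t^{-\frac{n-1}{2}}$. For $n=3$, where $m-1=0$, this is simply the explicit formula $\sinh R\cdot 2\sin(tR)/t$.

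For part (c), $h_\delta(t)$ with small $\delta$ is by definition the integral of $e^{i t\cdot}$-type spherical functions over the ball $B_\delta$. The bound $|h_\delta(t)|\le \mathrm{vol}(B_\delta)$ is trivial because spherical functions with real $t$ are bounded by $1$. For $|\delta t|\ge1$, rescale $r=\delta\rho$: for $\delta<1/4$ we have $\cosh\delta-\cosh(\delta\rho)=\frac{\delta^2}{2}(1-\rho^2)(1+O(\delta^2))$, uniformly in a smooth way, and the same endpoint analysis with frequency $\delta t$ gives decay $(\delta t)^{-\frac{n+1}{2}}$ relative to $\mathrm{vol}(B_\delta)\asymp\delta^n$.

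For part (d), evaluate at $t=i(\frac{n-1}{2}-s_j)$, with $|\Im t|\le \frac{n-1}{2}$. The spherical function $\omega_s$ satisfies $\omega_s(r)=1+O(r^2)$ uniformly for $r\le\delta$ and $s$ in this range. Averaging over $B_\delta$ then gives $\mu_\delta(s_j)=1+O_n(\delta^2)$, which is stronger than what is stated.

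The main obstacle is the repeated integration by parts in (a) when $m$ is a half-integer. There the derivatives of $f$ near the endpoints have fractional singularities, so a careless count loses powers of $t$. The fix I would try first is to substitute $\cosh R-\cosh r=v$ near each endpoint, reducing the endpoint piece to a model integral $\int_0 v^{m}e^{itg(v)}\,dv$ with $g$ smooth and $g'\asymp 1/\sinh R$. Van der Corput/stationary-phase endpoint asymptotics then give the sharp $t^{-m-1}$ uniformly in $R\ge1$.
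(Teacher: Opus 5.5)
Your proposal is correct, and it takes a different route from the paper. The paper gives no argument for this lemma. It only says the lemma follows from analyzing the Legendre representation \eqref{eq:hRdefhigher}, parallel to Lemma \ref{sphericalproperties}, where everything except the $\partial_R h_R$ bound is quoted from Chamizo.

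You instead derive every part from the Abel integral:
\begin{itemize}
\item a boundary layer of width $1/t$ at $r=\pm R$, plus repeated integration by parts elsewhere;
\item the identity $\partial_R h_R(t)=c_n m\sinh R\int_{-R}^R(\cosh R-\cosh r)^{m-1}e^{itr}\,dr$, which reduces the derivative bound to the same estimate with exponent $m-1$;
\item the rescaling $r=\delta\rho$ for (c), and the Taylor expansion of the spherical function for (d).
\end{itemize}
In effect you extend the paper's own $2$-dimensional derivative argument to the whole lemma.

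The Legendre route gives a genuine asymptotic, with main term $(\sinh R)^{\frac{n-1}{2}}t^{-\frac{n+1}{2}}\cos(tR-\phi_n)$. That is the kind of information used in \eqref{hrt}, but it requires importing asymptotics of $P^{-n/2}_{-1/2+it}(\cosh R)$ that are uniform in both $t$ and $R$. Your route gives only upper bounds. In exchange it is self-contained, visibly uniform in $R\geq 1$ and $\delta<1/4$, and handles $h_R$, $\partial_R h_R$ and $m_\delta$ by one mechanism.

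Your bound $1+O_n(\delta^2)$ in (d) is the one actually needed for the $O(\delta_\ell e^{(n-1)R})$ main-term loss. The stated $1+O_n(1+\delta^2)$ only says $\mu_\delta(s_j)=O(1)$.

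One piece of bookkeeping in (a) needs fixing, and it is not where you placed the difficulty. Write $v=R-\abs{r}$. For $k\geq 1$ you have $|f^{(k)}(r)|\ll_k e^{mR}v^{m-k}$ when $v\leq 1$, and $|f^{(k)}(r)|\ll_k e^{mR}e^{-v}$ when $v\geq 1$.

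For half-integer $m$, take $K=\lceil m\rceil+1=m+\tfrac32$. The final remainder is then $t^{-K}\int_{\abs{r}\leq R-1/t}|f^{(K)}|\ll t^{-K}e^{mR}\bigl(\int_{1/t}^1 v^{-3/2}\,dv+1\bigr)\ll e^{mR}t^{-m-1}$. So the direct count already closes, and no substitution or van der Corput argument is needed.

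The borderline case is integer $m$, i.e.\ odd $n$. There $K=m+1$, and your rule that each derivative costs $(R-\abs{r})^{-1}$ gives $\int_{1/t}^1 v^{-1}\,dv=\log t$. That is a logarithmic loss in a remainder you never estimated. The repair is to note that $f$ is then a polynomial of degree $m$ in $\cosh r$. Every term of $f^{(m+1)}$ is therefore $\ll e^{mR}e^{-kv}$ with $k\geq 1$, so $\int_{-R}^R|f^{(m+1)}|\ll e^{mR}$. Alternatively, integrate by parts once more.

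Finally, the monotonicity shortcut from dimension $2$ is not literally available for $n\geq 4$. For $n=4$, $f''$ is not monotone on $[0,R-1/t]$ once $R$ and $t$ are large. You do not need it.
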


\subsection{The main term and the error}

In contrast to dimension $2$, in higher dimensions the main term $M(X;z,w)$ of the counting function becomes more complicated. In fact, the main term takes the form
\begin{eqnarray*} 
M_\Gamma(X;z,w)=\sum_{\frac{n-1}{2} \leq s_j \leq n-1} \pi^{\frac{n-1}{2}} \frac{\Gamma \left(s_j-\frac{n-1}{2}\right)}{\Gamma(s_j+1)} \phi_j(z) \overline{\phi_j(w)} X^{s_j} + A_{\Gamma}(X; z,w),
\end{eqnarray*}
see \cite{PhillipsRudnick:1994}. Here the secondary main term $A_\Gamma(X;z,w)$ is a finite sum
\begin{eqnarray*}
A_{\Gamma}(X; z,w)= \sum_{\frac{n-1}{2} \leq s_j \leq n-1} a(X; s_j ,z,w), 
\end{eqnarray*}
where for every $s_j > (n-1)/2$ the quantity $a(X;s_j,z,w)$ contains a contribution coming from the eigenvalue $\lambda_j < (n-1)^2/4$ (if this eigenvalue exists). Moreover, it satisfies $a(X;s_j,z,w) \ll X^{s_j-2}$ \cite[eq.~(4.3)]{PhillipsRudnick:1994}. The term $a(X;(n-1)/2,z,w)$ includes the total contribution of the eigenvalue $\lambda_j = (n-1)^2/4$ and satisfies 
\begin{eqnarray*} \label{smallcontribution}
a(X;(n-1)/2,z,w) = O(X^{(n-1)/2+\epsilon}).
\end{eqnarray*}
Overall, we conclude that $ A_{\G} (X;z,w) = O (X^{n-3})$.
We refer to \cite[sect. 4]{PhillipsRudnick:1994} for a detailed explanation of this term.

\subsection{The proof of Theorem \ref{higherdim}}

We proceed to the proof, which works similarly as in $2$ dimensions. We convolute the kernel $K_X$ with the normalized ball kernel $k_{\delta} =  (\hbox{vol}(B_{\delta}))^{-1}\mathbf{1}_{B_{\delta}}$ and then we use Lemma \ref{sphericalpropertieshigher}. On each interval $I_{\ell} = [\ell,\ell+1]$ with $\ell \geq 2$, we choose $\delta (R) = \delta_{\ell} = e^{-{(n-1)(\ell+1)/2}}$.

Let us set the normalized spherical transform
\begin{equation*}
c_{\ell,t}^{\pm} (R) = e^{-(n-1)R/2} \, h_{R \pm \delta_{\ell}}(t) \, m_{\delta} (t)    
\end{equation*}
and the weights
\begin{equation*}
   Q_{\ell,T} = (1+\delta_{\ell} T)^{-\frac{n+1}{2}}. 
\end{equation*}
Since $\delta_{\ell}$ is constant on intervals $I_{\ell}$, we easily get the following estimate.
\begin{lemma}\label{highercpmdcpm}
The following estimates hold uniformly for $t\geq 1$ and for $R \in I_{\ell}$:
\begin{equation*}
\begin{split}
|c_{\ell,t}^{\pm}(R)| &\ll t^{-\frac{n+1}{2}}Q_{\ell,t},\\\\
|\partial_R c^{\pm}_{\ell,t}(R)| &\ll t^{-\frac{n-1}{2}}Q_{\ell,t}. 
\end{split}
\end{equation*}
\end{lemma}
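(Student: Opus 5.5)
The plan is to repeat the proof of Lemma \ref{cpmdcpm}, replacing the two-dimensional inputs with the higher-dimensional bounds of Lemma \ref{sphericalpropertieshigher}.

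\emph{The first estimate.} For $R\in I_\ell$ and $t\geq 1$ we write
\begin{equation*}
c_{\ell,t}^{\pm}(R)=e^{-(n-1)R/2}\,h_{R\pm\delta_\ell}(t)\,m_{\delta_\ell}(t).
\end{equation*}
Since $\delta_\ell<1/4$ and $R\geq \ell\geq 2$, we have $R\pm\delta_\ell\geq 1$, so part (a) of Lemma \ref{sphericalpropertieshigher} applies. It gives
\begin{equation*}
|h_{R\pm\delta_\ell}(t)|\ll e^{(n-1)(R\pm\delta_\ell)/2}\,t^{-(n+1)/2}.
\end{equation*}
The factor $e^{\pm(n-1)\delta_\ell/2}$ is bounded uniformly in $\ell$. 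Part (c) gives $|m_{\delta_\ell}(t)|\ll (1+\delta_\ell t)^{-(n+1)/2}=Q_{\ell,t}$. Multiplying these bounds yields $|c^{\pm}_{\ell,t}(R)|\ll t^{-(n+1)/2}Q_{\ell,t}$.

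\emph{The derivative estimate.} The parameter $\delta_\ell$ is constant on $I_\ell$, so $m_{\delta_\ell}(t)$ does not depend on $R$ there. Differentiating gives
\begin{equation*}
\partial_R c^{\pm}_{\ell,t}(R)=-\tfrac{n-1}{2}\,c^{\pm}_{\ell,t}(R)+e^{-(n-1)R/2}\,(\partial_H h_H)(t)\big|_{H=R\pm\delta_\ell}\,m_{\delta_\ell}(t).
\end{equation*}
The first term is $\ll t^{-(n+1)/2}Q_{\ell,t}\leq t^{-(n-1)/2}Q_{\ell,t}$ because $t\geq 1$. For the second term, the derivative bound in part (a) gives $|\partial_H h_H(t)|\ll e^{(n-1)H/2}t^{-(n-1)/2}$. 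Combining this with part (c) as before gives the claimed bound.

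\emph{Main obstacle.} The substantive input is the derivative estimate in part (a) of Lemma \ref{sphericalpropertieshigher}. The lemma is stated without proof in the paper, so I would verify it directly, following the two-dimensional argument.

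Start from
\begin{equation*}
h_R(t)=c_n\int_{-R}^R(\cosh R-\cosh r)^{\frac{n-1}{2}}e^{itr}\,dr.
\end{equation*}
Differentiating in $R$ gives no boundary terms, since the integrand vanishes at $r=\pm R$ (because $n\geq 3$). This leaves
\begin{equation*}
\partial_R h_R(t)=c_n\,\tfrac{n-1}{2}\,\sinh R\int_{-R}^R(\cosh R-\cosh r)^{\frac{n-3}{2}}e^{itr}\,dr.
\end{equation*}
This is, up to a factor of $\sinh R$, a spherical-type integral with exponent lowered by one. I would split the integral at $|r|=R-1/t$:
\begin{itemize}
\item On $R-1/t\leq |r|\leq R$, use $\cosh R-\cosh r\asymp (R-|r|)\sinh R$. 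This bounds the contribution by $e^{(n-3)R/2}t^{-(n-1)/2}$.
\item On $|r|\leq R-1/t$, integrate by parts, as in the proof of Lemma \ref{sphericalproperties}. The function $(\cosh R-\cosh r)^{(n-3)/2}$ is monotone in $|r|$, so this yields a bound of the same size, namely $e^{(n-3)R/2}t^{-(n-1)/2}$.
\end{itemize}
Multiplying by $\sinh R$ gives $|\partial_R h_R(t)|\ll e^{(n-1)R/2}t^{-(n-1)/2}$.

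The integration by parts only exploits oscillation once, so it gains just one factor of $1/t$. This is enough here, since the target exponent $(n-1)/2$ is exactly what the endpoint region $|r|\geq R-1/t$ produces.
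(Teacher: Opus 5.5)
Your deduction of the lemma from Lemma \ref{sphericalpropertieshigher} is exactly the paper's argument: product rule with $\delta_\ell$ frozen on $I_\ell$, absorbing $-\tfrac{n-1}{2}c^{\pm}_{\ell,t}$ via $t\ge 1$, then parts (a) and (c). The paper simply says the estimate follows as in Lemma \ref{cpmdcpm}.

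The flaw is in the step you single out as the main obstacle: your verification of the derivative bound in part (a) works only for $n=3$. For $n\ge 4$ the exponent $a=(n-3)/2$ is positive. Hence $f(r)=(\cosh R-\cosh r)^{a}$ is \emph{decreasing} in $|r|$, the reverse of the two-dimensional situation (exponent $-1/2$) you are imitating. One integration by parts on $|r|\le R-1/t$ then yields only
\[
t^{-1}\Bigl(f(R-1/t)+\int_0^{R-1/t}|f'(r)|\,dr\Bigr)=t^{-1}f(0)\asymp t^{-1}e^{(n-3)R/2}.
\]
After multiplying by $\sinh R$ you get $\partial_R h_R(t)\ll e^{(n-1)R/2}t^{-1}$, which falls short of the target by $t^{(n-3)/2}$. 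Your closing remark that one factor of $1/t$ is enough holds only when $(n-1)/2=1$.

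This loss matters. With only this bound, Proposition \ref{higherL2bounds} would give $\|\partial_R B^{\pm}_\ell(T,R;w)\|_2^2\ll T^{n-2}Q_{\ell,T}^2$. The cancellation $T^{-1/2}\cdot T^{1/2}=1$ in the maximal estimate then becomes $T^{-1/2}\cdot T^{(n-2)/2}=T^{(n-3)/2}$, and summing over dyadic $T\le\delta_\ell^{-1}$ costs a positive power of $X$.

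The decay you need comes from the endpoint singularity $(R-|r|)^{(n-3)/2}$, which a single integration by parts cannot detect. There are two ways to repair this.

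\emph{Repeated integration by parts.} Integrate by parts $\lfloor (n-3)/2\rfloor+2$ times on $|r|\le R-1/t$. Use $|f^{(k)}(r)|\ll e^{(n-3)R/2}(R-|r|)^{(n-3)/2-k}$ for $1/t\le R-|r|\le 1$, and $|f^{(k)}(r)|\ll e^{(n-3)R/2}e^{|r|-R}$ for $R-|r|\ge 1$, $k\ge 1$. Then every boundary term at $|r|=R-1/t$ and the final integral are $\ll e^{(n-3)R/2}t^{-(n-1)/2}$.

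\emph{Dimension reduction (cleaner).} Your differentiated formula says $\partial_R h^{(n)}_R(t)=2\pi\sinh R\,h^{(n-2)}_R(t)$, where $h^{(m)}_R$ is given by the same integral formula in dimension $m$. So the derivative bound in dimension $n$ is exactly the size bound of part (a) in dimension $n-2$. The base cases are $h^{(1)}_R(t)=2\sin(tR)/t$ and Lemma \ref{sphericalproperties}(a) for $n-2=2$.
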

As in \eqref{Bnpm} we define the dyadic sums
\begin{equation} \label{higherBnpm}
B_{\ell}^{\pm}(T,R; z,w) := \sum\limits_{T\le t_j<2T} c_{\ell,t_j}^{\pm} (R) \,  \phi_j(z) \, \overline {\phi_j(w)}. 
\end{equation}
We have the following $L^2$-bounds for these convoluted dyadic sums. Using the local Weyl law in $n$-dimensions
\begin{equation*}
   \sum_{t_j \leq T} |\phi_j(w)|^2 \sim c_w T^n, \ \ \text{for some constant} \ \ c_w >0 
\end{equation*}
and orthogonality, we deduce an analogue of Proposition \ref{L2bounds}.

\begin{proposition}\label{higherL2bounds}
For every $\ell\geq 2$, every dyadic $T\geq 1$ and every $R\in I_{\ell}$ the following bounds hold: 
\begin{equation*}
\begin{split}
\|B_{\ell}^{\pm}(T,R; w)\|_2^2:=& \|B_{\ell}^{\pm}(T,R; w)\|_{L^2(\Gamma \backslash \mathbb{H}^n)}^2 \ll_{\Gamma} T^{-1}Q_{\ell,T}^2, \\\\
\|\partial_R B_{\ell}^{\pm}(T,R; w)\|_2^2:=& \|\partial_R B_{\ell}^{\pm}(T,R; w)\|_{L^2(\Gamma \backslash \mathbb{H}^n)}^2 \ll_{\Gamma} T  Q_{\ell,T}^2.
\end{split}
\end{equation*}
\end{proposition}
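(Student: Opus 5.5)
Following the proof of Proposition \ref{L2bounds} line by line, the plan is to compute both $L^2$-norms in the $z$-variable exactly via orthonormality of $\{\phi_j\}$ on $\Gamma\backslash\mathbb{H}^n$. Since $R\in I_\ell$ is fixed and the coefficients $c^{\pm}_{\ell,t_j}(R)$ are real, expanding the square of \eqref{higherBnpm} and integrating over $z$ kills the off-diagonal terms $j\neq l$. This leaves
\begin{equation*}
\|B_{\ell}^{\pm}(T,R;w)\|_2^2=\sum_{T\le t_j<2T}|c^{\pm}_{\ell,t_j}(R)|^2|\phi_j(w)|^2 .
\end{equation*}
The same computation, after differentiating termwise in $R$, gives the identical formula with $c$ replaced by $\partial_R c$. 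Differentiation is legitimate because the sum over $T\le t_j<2T$ is finite, the spectrum being discrete for cocompact $\Gamma$.

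Next I would insert the pointwise bounds of Lemma \ref{highercpmdcpm}. For $t_j\in[T,2T)$ we have $Q_{\ell,t_j}\le Q_{\ell,T}$, since $Q_{\ell,t}$ is decreasing in $t$. Hence
\begin{equation*}
|c^{\pm}_{\ell,t_j}(R)|^2\ll T^{-(n+1)}Q_{\ell,T}^2 \quad\text{and}\quad |\partial_Rc^{\pm}_{\ell,t_j}(R)|^2\ll T^{-(n-1)}Q_{\ell,T}^2 ,
\end{equation*}
uniformly in $R\in I_\ell$. The $n$-dimensional local Weyl law $\sum_{t_j\le T}|\phi_j(w)|^2\sim c_wT^n$ then gives
\begin{equation*}
\sum_{T\le t_j<2T}|\phi_j(w)|^2\ll T^n
\end{equation*}
on the dyadic window. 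Only the upper bound is needed here, and it follows from the pointwise local Weyl law with a remainder $O(T^{n-1})$ (Hörmander), uniform in $w$ on a compact quotient.

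Multiplying the estimates gives $T^{-(n+1)}\cdot T^n=T^{-1}$ for the first norm and $T^{-(n-1)}\cdot T^n=T$ for the second, each times $Q_{\ell,T}^2$, as claimed. The only step I would treat with care is Lemma \ref{highercpmdcpm} itself, which feeds the derivative bound. Writing
\begin{equation*}
\partial_Rc^{\pm}_{\ell,t}=-\tfrac{n-1}{2}c^{\pm}_{\ell,t}+e^{-(n-1)R/2}(\partial_Hh_H)(t)\big|_{H=R\pm\delta_\ell}\,m_{\delta_\ell}(t),
\end{equation*}
one uses part (a) of Lemma \ref{sphericalpropertieshigher} for $\partial_Hh_H$. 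The shift $e^{(n-1)(R\pm\delta_\ell)/2}\asymp e^{(n-1)R/2}$ is harmless since $\delta_\ell<1$, and part (c) supplies the factor $Q_{\ell,t}$. Granting the stated lemma, nothing is dimension-specific beyond the exponent bookkeeping, which is exactly what produces the same $T^{\mp1}$ powers as in dimension two.
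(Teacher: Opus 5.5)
Your proposal is correct and is essentially the paper's argument: orthogonality of $\{\phi_j\}$ reduces both norms to the diagonal sums $\sum_{T\le t_j<2T}|c^{\pm}_{\ell,t_j}(R)|^2|\phi_j(w)|^2$ and their $\partial_R$ analogue. Lemma \ref{highercpmdcpm} together with the $n$-dimensional local Weyl law on the dyadic window then gives $T^{-(n+1)}\cdot T^n$ and $T^{-(n-1)}\cdot T^n$, each times $Q_{\ell,T}^2$; your extra remarks (termwise differentiation of a finite sum, monotonicity of $Q_{\ell,t}$, and the derivative bound via parts (a) and (c) of Lemma \ref{sphericalpropertieshigher}) fill in details the paper leaves implicit.
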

We can thus applu the same technique as in the $2$-dimensional case. We have
\begin{equation*}
\sum\limits_{k=0}^{\infty} Q_{\ell,2^k} \ll_{\Gamma} 1+\ell.
\end{equation*}
and the dyadic sum $T^{(n-1)/2} Q_{\ell,T}$ is summable over dyadic $T=2^k$. The contribution of the small $|t_j| \leq 1$ 
is trivially bounded by $(1+R) e^{(n-1)R/2} \ll X^{(n-1)/2} \log X$ (since they give a finite sum). The loss of the convoluted main term contributes 
\begin{equation*}
O_{\Gamma,z,w} \left(\delta_{\ell} e^{(n-1)R} + e^{(n-1)R/2}\right) = O_{\Gamma,z,w} \left(e^{(n-1)R/2}\right).
\end{equation*}
Finally, the first Borel--Cantelli Lemma, after picking $\eta = 1/m$ and taking the intersection, implies Theorem \ref{higherdim}.

\bibliographystyle{amsplain}

\end{document}